\numberwithin{equation}{section}
\theoremstyle{definition}
\newtheorem{Definition}{Definition}[section]
\newtheorem{Example}[Definition]{Example}
\newtheorem{Remark}[Definition]{Remark}
\theoremstyle{plain}
\newtheorem{Theorem}[Definition]{Theorem}
\newtheorem{Proposition}[Definition]{Proposition}
\newtheorem{Lemma}[Definition]{Lemma}
\newcommand{\bDiamond}{\mathbin{\Diamond}}
\newcommand\bigDiamond{\mathop{\mathpalette\bigDi@mond\relax}}
\newcommand\bigDi@mond[2]{%
	\vcenter{\hbox{\m@th
			\scalebox{\ifx#1\displaystyle 2\else1.2\fi}{$#1\Diamond$}%
	}}%
}
\newcommand{\al}{\alpha}
\newcommand{\be}{\beta}
\newcommand{\la}{\lambda}
\newcommand{\Z}{\mathbb{Z}}
\newcommand{\C}{\mathbb{C}}
\newcommand{\Fgl}{\mathfrak{gl}}
\newcommand{\Fsl}{\mathfrak{sl}}
\newcommand{\Fosp}{\mathfrak{osp}}
\newcommand{\Fg}{\mathfrak{g}}
\newcommand{\Fh}{\mathfrak{h}}
\newcommand{\Fn}{\mathfrak{n}}
\newcommand{\Fp}{\mathfrak{p}}
\newcommand{\FG}{\mathfrak{G}}
\newcommand{\CC}{\mathcal{C}}
\newcommand{\CL}{\mathcal{L}}
\newcommand{\op}{\operatorname}
\DeclareMathOperator{\End}{End}
\DeclareMathOperator{\Hom}{Hom}
\DeclareMathOperator{\Id}{Id}
\DeclareMathOperator{\im}{im}
\DeclareMathOperator{\Span}{Span}
\renewcommand{\hat}{\widehat}
\renewcommand{\tilde}{\widetilde}
\newcommand{\doublecoset}[3]{#1\reflectbox{$/$}#2/#3}
\def\ifemptyarg#1{%
	\if\relax\detokenize{#1}\relax % H. Oberdiek
	\expandafter\@firstoftwo
	\else
	\expandafter\@secondoftwo
	\fi}
\def\ifemptyarg#1{%
		\if\relax\detokenize{#1}\relax
	\expandafter\@firstoftwo
	\else
	\expandafter\@secondoftwo
	\fi}
\newcommand\dual[1]{{#1}^{\ast}}
\title{Diagonal reduction algebra for $\Fosp(1|2)$}
\author{Jonas T. Hartwig \and Dwight Anderson Williams II}
\date{February 26, 2022} %\date{today}
\address{Department of Mathematics, Iowa State University, Ames, IA-50011, USA}
\email{jth@iastate.edu}
\urladdr{http://jthartwig.net}
\address{MathDwight, The Bronx, New York, NY-10462, USA}
\email{dwight@mathdwight.com}
\urladdr{https://mathdwight.com}
\begin{document}
\maketitle
\begin{abstract}
The problem of providing complete presentations of reduction algebras associated to a pair of Lie algebras $(\mathfrak{G},\mathfrak{g})$ has previously been considered by Khoroshkin and Ogievetsky in the case of the diagonal reduction algebra for $\mathfrak{gl}(n)$. In this paper, we consider the diagonal reduction algebra of the pair of Lie superalgebras $\left(\mathfrak{osp}(1|2) \times \mathfrak{osp}(1|2), \mathfrak{osp}(1|2)\right)$ as a double coset space having an associative diamond product and give a complete presentation in terms of generators and relations. We also provide a PBW basis for this reduction algebra along with Casimir-like elements and a subgroup of automorphisms.
\end{abstract}

%\tableofcontents

% Main text
\section{Introduction}
\label{sec:introduction}
 Zhelobenko \cite{zhelobenkoRepresentationsReductiveLie1994} developed a localized version of Mickelsson's step algebra 
 \cite{mickelssonStepAlgebrasSemisimple1973} in considering the representation theory of reductive Lie algebras. An overview of the application of Mickelsson-Zhelobenko algebras in determining branching rules for classical Lie algebras and bases of Gelfand-Tsetlin type is found in \cite{molevGelfandTsetlinBases2006}. An important generalization made by Zhelobenko is a representation-free determination of lowering and raising operators associated to a Lie algebra $\Fg$ (reductive in a Lie algebra $\mathfrak{G}$) and an associative algebra $U$ containing the universal enveloping algebra $U(\Fg)$, with the operators satisfying certain dynamical relations. This algebra of lowering and raising operators provides a foundation for describing dynamical Weyl groups, as mentioned in \cite{etingofDynamicalWeylGroups2002}, and yields an alternative route to the developments of \cite{tarasovDifferenceEquationsCompatible2000}. Recent directions of applications can be seen in harmonic analysis, such as in \cite{debieHarmonicTransvectorAlgebra2017}, where the algebra is called the transvector algebra. These class of algebras are also known as symmetry algebras due to their importance in understanding symmetries of extremal systems \cite{zhelobenkoHypersymmetriesExtremalEquations1997} key to mathematical physics, including solutions to the usual Laplace operator, Dirac equation, and Maxwell equations. 

  Moreover, the algebras themselves have been an object of study with much progress made by Khoroshkin, Ogievetsky, et al. \cite{khoroshkinMickelssonAlgebrasZhelobenko2008, khoroshkinRingsFractionsReduction2014}. We follow Khoroshkin and Ogievetsky's naming convention and use the term \emph{reduction algebra} over the aforementioned choices. The general construction of reduction algebras extends to super, quantum, and affine cases \cite{ashtonRmatrixMickelssonAlgebras2015,matsumotoRepresentationsCentrallyExtended2014b, vandenhomberghNoteMickelssonStep1975, zhelobenkoExtremalProjectorsGeneralized1989} and the embedding of the reductive algebra $\Fg$ into the larger associative algebra $U$ gives rise to types of reduction algebras. In \cite{khoroshkinDiagonalReductionAlgebra2017}, a main result is the complete presentation of generators and relations for the diagonal reduction algebra for $\Fgl(n)$.
  Determining complete presentations of diagonal reduction superalgebras connected to Lie superalgebras is an open problem in the study of associative superalgebras and super representation theory. Particularly, we address the diagonal reduction algebra associated with the first Lie superalgebra in the $B(m,n)$ series, \cite{kacLieSuperalgebras1977} that being $\Fosp(1|2)$. Our main result is a complete presentation of the diagonal reduction algebra $Z(\Fosp(1|2) \times \Fosp(1|2), \Fosp(1|2))$.	
  
  In Section 2, we establish the constructions of \cite{zhelobenkoExtremalProjectorsGeneralized1989} and \cite{asherovaProjectionOperatorsSimple1973} in defining the reduction algebra $Z(\mathfrak{G},\Fg)$ and extremal projector associated with a Lie superalgebra $\Fg$, where $\Fg$ is reductive in $\mathfrak{G}$.
   
  Section 3 recalls descriptions of the orthosymplectic Lie superalgebra $\Fosp(1|2)$ before we present the diagonal reduction algebra for $\Fosp(1|2)$, a PBW basis theorem with respect to the diamond product of \cite{khoroshkinMickelssonAlgebrasZhelobenko2008}, and an explicit realization of $Z(\Fosp(1|2) \times \Fosp(1|2), \Fosp(1|2))$ in terms of generators and relations.
  
  Section 4 concludes with applications, including a determination of Casimir-like elements and an infinite subgroup of the group of automorphisms of $Z(\Fosp(1|2) \times \Fosp(1|2), \Fosp(1|2))$. 
  
\section*{Acknowledgments}

The first author was supported by Simons Foundation Collaboration Grant \#637600.
The second author was supported by NSF ECR-EHR Core Research Grant 
\#1920753.

\section{Reduction algebras for Lie superalgebras}\label{sec:RedAlgLSA}

In this section, we model the constructions of \cite{zhelobenkoExtremalProjectorsGeneralized1989} and offer a categorical perspective of the extremal projector of \cite{asherovaProjectionOperatorsSimple1973} and related works.

\subsection{Reduction algebra}
Let $\mathfrak{G}$ (Fraktur G) be a Lie superalgebra and $\Fg\subset\mathfrak{G}$ a Lie subsuperalgebra reductive in $\mathfrak{G}$ with a triangular decomposition $\Fg=\Fg_-\oplus\Fh\oplus\Fg_+$. Let $D$ be a multiplicative subset of $U(\Fh)\setminus\{0\}$, and let $U=D^{-1}U(\mathfrak{G})=D^{-1}U(\Fh)\otimes_{U(\Fh)}U(\mathfrak{G})$ be the localization of $U(\mathfrak{G})$ at the set $D$. 
Let $I=U\Fg_+$ be the left ideal in $U$ generated by $\Fg_+$, employing natural inclusions, $\Fg_{+} \xhookrightarrow{} U$, for example.
Let $N=N_U(I)=\{x\in U\mid Ix\subset I\}=\{x\in U\mid \Fg_+x\subset I\}$ be the normalizer of $I$ in $U$. Note that $D^{-1}U(\Fh)\subset N$ since $xh=hx-[h,x]\in I$ for all $x\in \Fg_+$ and $h\in\Fh$.
The \emph{($D$-localized) reduction algebra} is defined to be $Z=Z(\mathfrak{G},\Fg;D) = Z(U,\Fg)=N/I$. We often write $Z(\mathfrak{G}, \Fg)$ for $Z(\mathfrak{G},\Fg;D)$ when the multiplicative set $D$ is understood.

\subsection{Category \texorpdfstring{$\CC(U,\Fg_+)$}{of locally finite U-modules}}
A left $U$-module $V$ is \emph{locally $\Fg_+$-finite} if $\dim_\C U(\Fg_+)v<\infty$ for all $v\in V$.
Let $\CC=\CC(U,\Fg_+)$ be the category whose objects are locally $\Fg_+$-finite $U$-modules and morphisms are $U$-module homomorphisms. 

\subsection{Invariants and coinvariants}
We consider two covariant functors from $\CC$ to the category of super vector spaces $\mathsf{SVect_\C}$:
\begin{align}
(-)^+\colon\CC &\to\mathsf{SVect_\C}, \quad V^+=\{v\in V\mid \Fg_+v=0\},\\
(-)_-\colon\CC &\to\mathsf{SVect_\C}, \quad V_-=V/\Fg_-V.
\end{align}
The set $V^+$ is the space of \emph{$\Fg_+$-invariants} of $V$, and $V_-$ is the space of \emph{$\Fg_-$-coinvariants} of $V$. The elements of $V^+$ are also called \emph{primitive} (or \emph{singular}) \emph{vectors}.
Associated to these functors are natural transformations to and from the forgetful functor $\mathsf{F}\colon \CC\to\mathrm{SVect}_\C$, given by inclusion, respectively, canonical projection:
\begin{align}
\iota&\colon(-)^+\Rightarrow \mathsf{F}, \quad \iota_V\colon V^+\hookrightarrow V,\\
\pi&\colon\mathsf{F}\Rightarrow (-)_-, \quad \pi_V\colon V\twoheadrightarrow V_-.
\end{align}
Define $\mathsf{Q}$ the be the vertical composition of the natural transformations above:
\begin{equation}
\mathsf{Q}\colon(-)^+\Rightarrow (-)_-,\quad \mathsf{Q}_V=\pi_V\circ\iota_V.
\end{equation}

\begin{equation}
\begin{tikzcd}[row sep=huge]
    \CC
     \arrow[r, bend left=65, "(-)^+"{name=I}, end anchor={[xshift=-2.5ex]north}]
     \arrow[r, "F"{inner sep=0,fill=white,anchor=center,name=F}]
     \arrow[r, bend right=65, "(-)_-"{name=C, swap}, end anchor={[xshift=-2.5ex]south}]
     \arrow[from=I.south-|F,to=F,Rightarrow,shorten=2pt,"\iota"] 
     \arrow[from=F,to=C.north-|F,Rightarrow,shorten=2pt,"\pi"] &
   \mathsf{SVect}
\end{tikzcd} 
\end{equation}

Explicitly, for each object $V$ of $\CC$, we have a map of super vector spaces $\mathsf{Q}_V\colon V^+\to V_-$ given by $\mathsf{Q}_V(v)=v+\Fg_-V$ for all $v\in V^+$.

\subsection{Extremal projectors}
\label{sec:extremal-projectors}

\begin{Definition}
We say that the category $\CC$ has an \emph{extremal projector} if the natural transformation $\mathsf{Q}$ is invertible. In this case we denote the inverse by $\mathsf{P}\colon(-)_-\Rightarrow (-)^+$.
\end{Definition}

\begin{Proposition} \label{prp:extremal}
The following statements are equivalent:
\begin{enumerate}[{\rm (a)}]
\item $\CC$ has an extremal projector.
\item There exists a natural endomorphism $P$ of the forgetful functor $\mathsf{F}\colon\CC\to\mathsf{SVect}_\C$ such that for all objects $V$ in $\CC$:
\begin{enumerate}[{\rm (i)}]
\item $P_V\circ \iota_V =\iota_V$, % (equivalently, $P_V(v)=v$ for all $v\in V$ with $\Fg_+v=0$),
\item $\pi_V\circ P_V = \pi_V$, % (equivalently, $P_V(v)\in v+\Fg_-V$ for all $v\in V$),
\item $\im P_V\subset\im \iota_V$,
\item $\ker \pi_V\subset \ker P_V$,
%The image of $P_V$ is contained in the image of $\iota_V$ and the kernel of $\pi_V$ is contained in the kernel of $P_V$
%(equivalently, $\Fg_+P_V(v)=0=P_V(\Fg_-v)$ for all $v\in V$),
%\item $\pi_V\circ P_V\circ\iota_V=\mathsf{Q}_V$ (equivalently,
%if $v\in V$ and $\Fg_+v=0$, then $P_V(v)\in v+\Fg_- V$),
% (i.e. ``$P_V\equiv 1$ mod $\Fg_-U+U\Fg_+$''),
\end{enumerate}
\item
For every $U$-module $V$ in $\CC$ there exists a map of super vector spaces $P_V \colon V\to V$ such that, for every morphism $f\colon V\to W$ in $\CC$ we have $f\circ P_V = P_W\circ f$, and for every object $V$ in $\CC$ and 
every $v\in V$:
\begin{enumerate}[{\rm (i')}]
\item if $\Fg_+v=0$, then $P_V(v)=v$,
\item $P_V(v)\in v+\Fg_-V$,
\item $\Fg_+P_V(v)=0$,
\item $P_V(\Fg_-v)=0$.
%The image of $P_V$ is contained in the image of $\iota_V$ and the kernel of $\pi_V$ is contained in the kernel of $P_V$
%\item $\pi_V\circ P_V\circ\iota_V=\mathsf{Q}_V$ (equivalently,
%if $v\in V$ and $\Fg_+v=0$, then $P_V(v)\in v+\Fg_- V$),
% (i.e. ``$P_V\equiv 1$ mod $\Fg_-U+U\Fg_+$''),
\end{enumerate}
\end{enumerate}
Furthermore, if these statements hold, then for all objects $V$ in $\CC$:
\begin{equation}
P_V^2=P_V.
\end{equation}
\end{Proposition}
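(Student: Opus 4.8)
The plan is to treat the three conditions as successively more concrete repackagings of the same data, and to move between them using the universal properties of the monomorphism $\iota_V$ and the epimorphism $\pi_V$. First I would dispose of the equivalence (b) $\Leftrightarrow$ (c), which is purely a translation: since $\im\iota_V=V^+$ and $\ker\pi_V=\Fg_-V$, conditions (i)--(iv) say respectively that $P_V$ fixes $V^+$ pointwise, that $P_V(v)\equiv v$ modulo $\Fg_-V$, that $P_V(v)$ is always $\Fg_+$-invariant, and that $P_V$ kills $\Fg_-V$; these are exactly (i$'$)--(iv$'$). The requirement that $P=(P_V)_V$ be a natural endomorphism of $\mathsf F$ is by definition the commutation $f\circ P_V=P_W\circ f$ for all morphisms $f\colon V\to W$, so no further argument is needed there.

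For (a) $\Rightarrow$ (b) I would start from the inverse $\mathsf P\colon(-)_-\Rightarrow(-)^+$ of $\mathsf Q$ and set $P_V=\iota_V\circ\mathsf P_V\circ\pi_V\colon V\to V$. Naturality of $P$ is immediate because $\iota$, $\mathsf P$, and $\pi$ are natural and $P$ is their vertical composite. Conditions (iii) and (iv) hold by construction, since $P_V$ visibly has image in $\im\iota_V$ and factors through $\pi_V$. For (i) and (ii) I would insert the defining identity $\mathsf Q_V=\pi_V\circ\iota_V$ and use the inverse relations $\mathsf P_V\circ\mathsf Q_V=\Id_{V^+}$ and $\mathsf Q_V\circ\mathsf P_V=\Id_{V_-}$ to collapse $P_V\circ\iota_V$ to $\iota_V$ and $\pi_V\circ P_V$ to $\pi_V$.

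The converse (b) $\Rightarrow$ (a) is where the real content sits, and my approach is a double factorization. Given a natural endomorphism $P$ of $\mathsf F$ satisfying (i)--(iv): condition (iii) together with injectivity of $\iota_V$ lets me write $P_V=\iota_V\circ\bar P_V$ for a unique $\bar P_V\colon V\to V^+$; condition (iv) then gives $\ker\pi_V\subset\ker\bar P_V$, so surjectivity of $\pi_V$ yields a unique $\mathsf P_V\colon V_-\to V^+$ with $\bar P_V=\mathsf P_V\circ\pi_V$, i.e. $P_V=\iota_V\circ\mathsf P_V\circ\pi_V$. I would then check that $\mathsf P_V$ inverts $\mathsf Q_V$: identity (ii) reads $\pi_V\circ\iota_V\circ\mathsf P_V\circ\pi_V=\pi_V$, and cancelling the epimorphism $\pi_V$ gives $\mathsf Q_V\circ\mathsf P_V=\Id_{V_-}$; identity (i) reads $\iota_V\circ\mathsf P_V\circ\pi_V\circ\iota_V=\iota_V$, and cancelling the monomorphism $\iota_V$ gives $\mathsf P_V\circ\mathsf Q_V=\Id_{V^+}$. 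Thus each $\mathsf Q_V$ is an isomorphism of super vector spaces; since $\mathsf Q$ is already natural, the componentwise inverses $\mathsf P_V$ automatically assemble into a natural transformation $\mathsf P$, giving (a).

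Finally, idempotency is a one-line consequence of (c): for any $v$, condition (iii$'$) puts $P_V(v)$ in $V^+$, and then (i$'$) gives $P_V(P_V(v))=P_V(v)$, so $P_V^2=P_V$; equivalently, from $P_V=\iota_V\circ\mathsf P_V\circ\pi_V$ one computes $P_V^2=\iota_V\circ\mathsf P_V\circ\mathsf Q_V\circ\mathsf P_V\circ\pi_V=\iota_V\circ\mathsf P_V\circ\pi_V=P_V$. I expect the only genuine obstacle to be the bookkeeping in the converse: ensuring the two factorizations are well defined (using that $\iota_V$ is mono and $\pi_V$ is epi) and that all maps in sight are parity-preserving, so that $\bar P_V$ and $\mathsf P_V$ remain morphisms of super vector spaces; the naturality of $\mathsf P$ is best handled by invoking the standard fact that the inverse of a natural isomorphism is natural, rather than re-derived by hand.
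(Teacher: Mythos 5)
Your proposal is correct and follows essentially the same route as the paper: the same construction $P_V=\iota_V\circ\mathsf{P}_V\circ\pi_V$ for (a)$\Rightarrow$(b), the same double factorization of $P_V$ through $\iota_V$ (mono) and $\pi_V$ (epi) for (b)$\Rightarrow$(a) with the inverse relations obtained by cancelling, and the same composition argument for idempotency. The only cosmetic difference is that you establish naturality of $\mathsf{P}$ by invoking the standard fact that componentwise inverses of a natural transformation are natural, whereas the paper deduces it from the uniqueness of the factorization together with naturality of $P$, $\iota$, and $\pi$ --- both are valid.
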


\begin{proof}
Statements (b) and (c) are equivalent by definition of the involved notions.
(a)$\Rightarrow$(b):
Suppose $\CC$ has an extremal projector and let
 $\mathsf{P}\colon(-)_-\Rightarrow(-)^+$ be the inverse of $\mathsf{Q}$.
For every object $V$ of $\CC$, define $P_V=\iota_V\circ\mathsf{P}_V\circ\pi_V$. Then $P$ is the vertical composition of the natural transformations $\iota$, $\mathsf{P}$, and $\pi$, hence is a natural endomorphism of $\mathsf{F}$. We check the properties: 

(i) $P_V\circ\iota_V=\iota_V\circ\mathsf{P}_V\circ\pi_V\circ\iota_V=\iota_V\circ\mathsf{P}_V\circ\mathsf{Q}_V=\iota_V$.
(ii) $\pi_V\circ P_V=\pi_V\circ\iota_V\circ\mathsf{P}_V\circ\pi_V=\mathsf{Q}_V\circ\mathsf{P}_V\circ\pi_V=\pi_V$.
(iii) and (iv) are immediate by definiton of $P_V$.

(b)$\Rightarrow$(a): Suppose $P$ is a natural endomorphism of $\mathsf{F}$ satisfying (i)-(iv). Let $V$ be an object in $\CC$. By (iii) and (iv), $P_V$ induces a unique map $\mathsf{P}_V\colon V_-\to V^+$ such that $\iota_V\circ\mathsf{P}_V\circ\pi_V=P_V$. By uniqueness of $\mathsf{P}_V$ and the naturality of $P_V$, $\iota_V$, and $\pi_V$, the maps $\mathsf{P}_V$ define a natural transformation $\mathsf{P}\colon (-)_-\Rightarrow (-)^+$. It remains to show that $\mathsf{P}$ is the inverse of $\mathsf{Q}$.

We have $\iota_V\circ\mathsf{P}_V\circ\mathsf{Q}_V = \iota_V\circ\mathsf{P}_V\circ\pi_V\circ\iota_V = P_V\circ\iota_V=\iota_V$ by (i). Since $\iota_V$ is monic, this implies $\mathsf{P}_V\circ\mathsf{Q}_V=\Id_{V^+}$.
Similarly, $\mathsf{Q}_V\circ\mathsf{P}_V\circ\pi_V= \pi_V\circ\iota_V\circ\mathsf{P}_V\circ\pi_V = \pi_V\circ P_V=\pi_V$ by (ii). Since $\pi_V$ is epic, this implies $\mathsf{Q}_V\circ\mathsf{P}_V=\Id_{V_-}$.

Finally,
$P_V^2=(\iota_V\circ\mathsf{P}_V\circ\pi_V)^2=\iota_V\circ\mathsf{P}_V\circ\mathsf{Q}_V\circ\mathsf{P}_V\circ\pi_V=\iota_V\circ\mathsf{P}_V\circ\pi_V=P_V$.
\end{proof}

\begin{Definition}
If $V$ is an object of $\CC$, we  call $P_V$ the \emph{extremal projector at $V$}.
\end{Definition}

\subsection{Universal highest weight module}
The \emph{universal $\Fg_+$-highest weight $U$-module} is
\begin{equation}
 M=U/I.
\end{equation}
This is a left $U$-module generated by the vector $\mathbf{1}=1_U+I\in M$ which satisfies $\Fg_+\mathbf{1}=0$.
Since $IN\subset I$, the space $M$ is a right $N$-module. Furthermore $MI=0$. Thus $M$ is a right $Z$-module. Together, this makes $M$ a $(U,Z)$-bimodule.

Let $V$ be any left $U$-module.
Restricting the action to $N$ and using that $I V^+=0$ we may regard $V^+$ as a $Z$-module. On the other hand, since $M$ is a $(U,Z)$-bimodule, $\Hom_U(M,V)$ is a left $Z$-module.
\begin{Lemma} \label{lem:representable}
For any left $U$-module $V$, there is a natural isomorphism of left $Z$-modules
\begin{equation}\label{eq:representable}
\psi\colon V^+ \cong \Hom_U(M, V).
\end{equation}
\end{Lemma}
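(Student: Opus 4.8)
The plan is to realize $\psi$ as the standard identification of homomorphisms out of the cyclic module $M=U/I$ with a subspace of the target, and then to check that this identification intertwines the two $Z$-actions and is natural in $V$; in other words, the content of the lemma is that the functor $(-)^+$ is corepresented by the $(U,Z)$-bimodule $M$.

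First I would use that $M$ is generated as a left $U$-module by $\mathbf 1=1_U+I$, so that any $f\in\Hom_U(M,V)$ is determined by $f(\mathbf 1)$ and the rule $f\mapsto f(\mathbf 1)$ identifies $\Hom_U(M,V)$ with $\{v\in V\mid Iv=0\}$. Since $I=U\Fg_+$ and $1\in U$, the condition $Iv=0$ is equivalent to $\Fg_+v=0$, so this subspace is precisely $V^+$. Concretely I would define $\psi\colon V^+\to\Hom_U(M,V)$ by $\psi(v)=f_v$ with $f_v(u+I)=uv$; the hypothesis $v\in V^+$ gives $Iv=U\Fg_+v=0$, which is exactly what makes $f_v$ well defined on the quotient $U/I$. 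Its inverse sends $f$ to $f(\mathbf 1)\in V^+$, the membership following from $\Fg_+f(\mathbf 1)=f(\Fg_+\mathbf 1)=f(0)=0$. Because $\mathbf 1$ is even, $\psi$ is parity-preserving, hence an isomorphism in $\mathsf{SVect}_\C$.

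The key step is $Z$-linearity. Recall the two structures: on $V^+$ the action of $z=n+I\in Z$ is $z\cdot v=nv$, which lands in $V^+$ since $\Fg_+(nv)=(\Fg_+n)v\subseteq Iv=0$ using $n\in N$, and which factors through $Z=N/I$ because $IV^+=0$; on $\Hom_U(M,V)$ the left $Z$-action is $(z\cdot f)(m)=f(mz)$, where $m\,n=un+I$ for $m=u+I$ is the right $N$-action on $M$, descending to $Z$ as $MI=0$. I would then compute $\psi(z\cdot v)=f_{nv}$ with $f_{nv}(u+I)=u(nv)$, and $(z\cdot\psi(v))(u+I)=f_v(un+I)=(un)v$, and conclude equality from associativity $(un)v=u(nv)$.

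Finally I would verify naturality: for a morphism $g\colon V\to W$ in $\CC$ one has $g(V^+)\subseteq W^+$, and $\psi_W(g(v))=g\circ\psi_V(v)$ follows at once from $u\,g(v)=g(uv)$. I expect the only real obstacle to be bookkeeping rather than conceptual: correctly matching the right $Z$-module structure on $M$ against the $N$-action used to define the $Z$-module $V^+$ in the $Z$-linearity identity, and tracking parities so that $\psi$ is recognized as an even isomorphism of super vector spaces and not merely a linear bijection. The essential mathematics is the elementary cyclic-module adjunction.
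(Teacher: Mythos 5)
Your proposal is correct and follows exactly the paper's argument: the paper's proof is the same evaluation-at-$\mathbf{1}$ bijection, sending $v\in V^+$ to the unique $U$-module map $\psi_v$ with $\psi_v(\mathbf{1})=v$ and inverting via $f\mapsto f(\mathbf{1})$. You merely spell out the routine verifications ($Z$-linearity via $(un)v=u(nv)$, naturality, parity) that the paper leaves implicit, and these are all accurate.
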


\begin{proof}
Send $v\in V^+$ to the unique left $U$-module map $\psi_v\colon M\to V$ determined by requiring $\psi_v(\mathbf{1})=v$. The inverse sends $\psi\colon M\to V$ to $\psi(\mathbf{1})$.
\end{proof}

Of particular importance is the case $V=M$, which gives another realization of the reduction algebra $Z$.

\begin{Lemma}\label{lem:Z-realizations}
We have the following two descriptions of $Z$:
\begin{enumerate}[{\rm (i)}]
\item $Z=M^+$.
\item There is a natural isomorphism of associative superalgebras
\begin{equation}
Z\cong \End_U(M)^{\mathrm{op}}.
\end{equation}
\end{enumerate}
\end{Lemma}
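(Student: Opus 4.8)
The plan is to prove both descriptions of $Z$ by leveraging the machinery already established. For part (i), I would recall that $Z = N/I$ by definition, and that $M = U/I$. The key observation is that $M^+ = \{m \in M \mid \Fg_+ m = 0\}$, and an element $m = x + I \in M$ (with $x \in U$) satisfies $\Fg_+ m = 0$ precisely when $\Fg_+ x \subset I$, i.e.\ when $x \in N$. Thus the natural map $N \to M$, $x \mapsto x + I$, has image exactly $M^+$ and kernel exactly $N \cap I = I$ (since $I \subset N$). This gives an isomorphism of super vector spaces $Z = N/I \xrightarrow{\ \sim\ } M^+$. I would emphasize that this identification is the one already implicit in the bimodule structure on $M$ discussed above.

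For part (ii), the cleanest route is to apply Lemma~\ref{lem:representable} with $V = M$. That lemma gives a natural isomorphism of left $Z$-modules $\psi : M^+ \cong \Hom_U(M, M) = \End_U(M)$. Combining with part (i), I obtain a left $Z$-module isomorphism $Z \cong \End_U(M)$. The remaining task is to promote this $Z$-module isomorphism to an isomorphism of associative superalgebras $Z \cong \End_U(M)^{\mathrm{op}}$, which requires identifying the multiplicative structure. Here I would trace through how the product in $Z$ (coming from the algebra structure on $N/I$) corresponds to composition in $\End_U(M)$ under $\psi$.

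The main obstacle, and the reason the opposite algebra appears, is precisely this compatibility of products. I would argue as follows: under $\psi$, an element $z \in Z = M^+$ corresponds to the endomorphism $\psi_z$ determined by $\psi_z(\mathbf{1}) = z$, where for $m = u\mathbf{1} \in M$ one has $\psi_z(u\mathbf{1}) = u z$ (using that $z$ lifts to an element of $N$ normalizing $I$, so right multiplication is well defined on $M$). Now take $z_1, z_2 \in Z$ with lifts in $N$. The product $z_1 z_2$ in $Z$ is represented by the element $z_1 \cdot z_2 \in M$ obtained by letting $z_1$ act on the right of $z_2$, i.e.\ via the right $Z$-module structure on $M$. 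Computing the corresponding endomorphism, $\psi_{z_1 z_2}(\mathbf{1}) = z_1 z_2$, whereas the composite $\psi_{z_1} \circ \psi_{z_2}$ sends $\mathbf{1} \mapsto z_2 \mapsto \psi_{z_1}(z_2) = z_2 z_1$ (since $\psi_{z_1}$ applied to $z_2 = z_2 \mathbf{1}$ gives right multiplication of the lift of $z_2$ by $z_1$). Thus $\psi_{z_1 z_2} = \psi_{z_2} \circ \psi_{z_1}$, which is exactly the statement that $\psi$ is an anti-isomorphism of algebras, giving $Z \cong \End_U(M)^{\mathrm{op}}$. I would be careful with signs arising from the super (Koszul) structure when verifying that $\psi_z$ is genuinely $U$-linear and that the order-reversal is clean, but the underlying mechanism is the standard one identifying a ring with the opposite endomorphism ring of a free rank-one module over itself.
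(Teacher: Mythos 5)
Your proposal is correct and takes essentially the same route as the paper: part (i) is the same coset characterization ($\Fg_+(x+I)\subset I$ iff $x\in N$, with kernel $N\cap I=I$ left implicit in the paper), and part (ii) specializes Lemma~\ref{lem:representable} to $V=M$ and evaluates $\psi_{z_1}\circ\psi_{z_2}$ on the generator $\mathbf{1}$, using left $U$-linearity, to get exactly the paper's order reversal $\psi_{z_1z_2}=\psi_{z_2}\circ\psi_{z_1}$. Your closing caution about Koszul signs is harmless but unnecessary at the level of rigor the paper adopts, since $U$-linearity is imposed without sign twists and the computation at $\mathbf{1}$ involves none.
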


\begin{proof}
(i) A coset $u+I\in M$ is in $M^+$ if and only if $Iu\subset I$ which by definition means $u\in N$.

(ii) 
%The inclusion map $N\to U$ composed with $U\to U/I$ has kernel $I$, hence induces an injective map $Z=N/I\to U/I$. The image of this map is precisely $M^+$. 
Let $\psi\colon M^+\to\End_U(V)$ be the super vector space isomorphism  \eqref{eq:representable} for the special case of $V=M$. Let  $X,Y\in Z$, say $X=x+I$, $Y=y+I$. Then $\psi_{XY}(\mathbf{1})=XY$ while $(\psi_Y\circ\psi_X)(\mathbf{1}) = \psi_Y(X)=\psi_Y(x+I)$. Since $\psi_Y$ is a left $U$-module endomorphism, we have
$\psi_Y(x+I)=x \psi_Y(1_{U}+I)=xY=xy+I=XY$.
\end{proof}

\begin{Remark}
By tensor-hom adjunction we have $(M\otimes_Z -) \dashv \Hom_U(M,-)$. More precisely,
\begin{equation}
\Hom_U(M\otimes_Z X,V)\cong\Hom_Z(X,\Hom_U(M,V))\cong \Hom_Z(X,V^+)
\end{equation}
for any left $U$-module $V$ and left $Z$-module $X$.
\end{Remark}

\begin{Lemma}\label{lem:M-is-projective}
If $\CC$ has an extremal projector, then $M$ is a projective object of $\CC$.
\end{Lemma}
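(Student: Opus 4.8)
The plan is to deduce projectivity from the representability in Lemma~\ref{lem:representable} together with the naturality of the extremal projector. Recall that $M$ is a projective object of $\CC$ if and only if the functor $\Hom_U(M,-)$ sends every epimorphism of $\CC$ to a surjection. By the natural isomorphism $\Hom_U(M,-)\cong(-)^+$ of Lemma~\ref{lem:representable}, this is equivalent to asking that the invariants functor $(-)^+$ send epimorphisms to surjections. Since $\CC$ is closed under quotients inside the category of $U$-modules, its epimorphisms are exactly the surjective $U$-module maps, and on such a map $g\colon V\to W$ the functor $(-)^+$ acts by the restriction $g^+=g|_{V^+}\colon V^+\to W^+$, which is well defined because $U$-module maps carry $\Fg_+$-invariants to $\Fg_+$-invariants. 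Thus it suffices to prove that $g^+$ is surjective whenever $g$ is.

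The key step is to build preimages using the extremal projector supplied by Proposition~\ref{prp:extremal}. Let $w\in W^+$. Choosing any $v\in V$ with $g(v)=w$, I would set $v'=P_V(v)$. Condition~(iii') gives $v'\in V^+$, and the naturality of $P$ applied to the morphism $g$ yields $g(v')=g(P_V(v))=P_W(g(v))=P_W(w)$. Since $w$ lies in $W^+$, condition~(i') gives $P_W(w)=w$, whence $g(v')=w$ with $v'\in V^+$. Therefore $g^+$ is surjective, so $(-)^+\cong\Hom_U(M,-)$ preserves epimorphisms, and $M$ is projective in $\CC$. Equivalently, in the language of lifts: a morphism $f\colon M\to W$ corresponds under $\psi$ to the element $f(\mathbf{1})=w\in W^+$, and the map $\tilde f\colon M\to V$ determined by $\tilde f(\mathbf{1})=v'$ satisfies $g\circ\tilde f=f$ because both agree on the generator $\mathbf{1}$.

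I expect the only real subtlety to be formal rather than computational. One must check that under the isomorphism $\psi$ of Lemma~\ref{lem:representable} the map $\Hom_U(M,g)$ corresponds to $g^+$, which follows by evaluating both sides on the generator $\mathbf{1}$, and that epimorphisms of $\CC$ are genuine surjections so that a lift $v$ of $w$ exists; one should also record at the outset that $M$ itself lies in $\CC$, which holds because $\ad\Fg_+$ acts locally finitely on $U(\mathfrak{G})$ and hence on $M$. The mathematical heart of the argument is a single use of the two defining features of the extremal projector: its naturality, which lets it commute past $g$, and the fact that it acts as the identity on $\Fg_+$-invariants, which converts an arbitrary lift $v$ of $w$ into an invariant lift $v'=P_V(v)$ still mapping onto $w$.
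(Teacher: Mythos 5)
Your proof is correct and is essentially the paper's argument: both take an arbitrary preimage of the relevant invariant vector, apply the extremal projector to it, and use naturality of $P$ together with properties (i') and (iii') of Proposition~\ref{prp:extremal} to obtain a $\Fg_+$-invariant preimage, which then determines the lift $M\to X$ on the generator $\mathbf{1}$. Your packaging of this via the isomorphism $\Hom_U(M,-)\cong(-)^+$ of Lemma~\ref{lem:representable} (reducing projectivity to $(-)^+$ preserving surjections) is only a cosmetic reformulation of the lifting argument the paper carries out directly.
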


\begin{proof}
Consider a diagram in $\CC$
\[
\begin{tikzcd}
& M \arrow[d,"f"] & \\
X\arrow[r,"\phi"] & Y \arrow[r] & 0
\end{tikzcd}
\]
where the row is exact. We will show that $f$ can be lifted to a map $g\colon M\to X$ such that the diagram commutes.
Let $y_0$ be the image of $\mathbf{1}$.
Since $\phi$ is surjective there is an $x\in X$ such that $\phi(x)=y_0$. Define $x_0=P_X(x)$ where $P_X$ is the extremal projector at $X$. Then
\[
\phi(x_0) = \phi\circ P_X(x) = P_Y\circ\phi(x)=P_Y(y_0)=P_Y(f(\mathbf{1}))=f(P_M(\mathbf{1}))=f(\mathbf{1})=y_0.
\]
(Here we used that $P_M(\boldsymbol{1})=\boldsymbol{1}$ which follows from property (i') of Proposition \ref{prp:extremal}.)
Moreover $\Fg_+x_0 = \Fg_+P_Xx=0$. So there is a unique $U$-module map $g\colon M\to X$ satisfying $g(\mathbf{1})=x_0$. Clearly $\psi\circ g= f$ since both sides coincide on the generator $\mathbf{1}$ of $M$.
\end{proof}

\subsection{Diamond product on the double coset space}
The space of coinvariants $M_-=M/\Fg_-M$ is naturally identified with the double coset space $\doublecoset{\Fg_-U}{U}{U\Fg_+}=U/\Romanbar{II}$ where $\Romanbar{II}=U\Fg_+ + \Fg_-U$. Explicitly, $(u+I)+\Fg_-M\mapsto u+\Romanbar{II}$ for $u\in U$.

Suppose $\CC$ has an extremal projector. Then we may define a product $\bDiamond$ on $M_-$ by requiring that the super vector space isomorphism $\mathsf{P}_M\colon M_-\to M^+=Z$ is an algebra isomorphism. That is, define for all $x,y\in M_-$:
\[
x \bDiamond y = \mathsf{Q}_M\big(\mathsf{P}_M(x)\mathsf{P}_M(y)\big).
\]
Since $M$ is a right $Z$-module and $\Fg_-M$ is a right $Z$-submodule, the quotient space $M_-$ is a right $Z$-module. The following describes the diamond product in terms of the extremal projector and this right action of $Z$ on $M_-$, remembering that $M^+=Z$.

\begin{Lemma}[{\cite{khoroshkinMickelssonAlgebrasZhelobenko2008}}]
For $x,y\in M_-$, we have 
%$\doublecoset{J}{U}{I}$, define
\begin{equation}
x \bDiamond y = x\mathsf{P}_M(y)
\end{equation}
\end{Lemma}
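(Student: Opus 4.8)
The plan is to work entirely inside the concrete realizations established earlier, namely $M = U/I$, $M^+ = Z$, and $M_- = U/\Romanbar{II}$, and to convert the abstract product on $Z = M^+$ into the right $Z$-action on $M$ via the description $Z \cong \End_U(M)^{\mathrm{op}}$ from Lemma~\ref{lem:Z-realizations}. First I would set $X = \mathsf{P}_M(x)$ and $Y = \mathsf{P}_M(y)$, both of which lie in $M^+ = Z$. By the definition of $\bDiamond$ the identity $x \bDiamond y = \mathsf{Q}_M(XY)$ holds, where $XY$ is the product in the associative algebra $Z = M^+$. Thus the whole problem reduces to rewriting $\mathsf{Q}_M(XY)$ as a right module action; recalling that $\mathsf{Q}_M = \pi_M \circ \iota_M$, I would analyze $\iota_M(XY)$ and then project.

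The first and main step is to identify the internal product $XY \in M^+$, viewed inside $M$ through $\iota_M$, with the right $Z$-action of $Y$ on $\iota_M(X)$. Under the isomorphism $\psi$ of Lemma~\ref{lem:representable} the element $Y \in Z$ corresponds to the $U$-endomorphism $\psi_Y$ of $M$ determined by $\psi_Y(\mathbf{1}) = Y$, and for any $m \in M$ one checks by $U$-linearity that $\psi_Y(m) = m\,Y$; in other words, right multiplication by $Y$ on $M$ \emph{is} the endomorphism $\psi_Y$. Since, as recorded in the proof of Lemma~\ref{lem:Z-realizations}, the isomorphism $Z \cong \End_U(M)^{\mathrm{op}}$ sends the product $XY$ to $\psi_Y \circ \psi_X$, evaluating at $\mathbf{1}$ gives $\iota_M(XY) = \psi_Y(\psi_X(\mathbf{1})) = \psi_Y(\iota_M(X)) = \iota_M(X)\,Y$. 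This is the crucial compatibility: the product in $Z$ is realized inside $M$ precisely as the right action.

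The second step is to push this equality through the canonical projection $\pi_M \colon M \to M_-$. Because $\Fg_- M$ is a right $Z$-submodule of $M$, the map $\pi_M$ is right $Z$-linear, so $\pi_M\big(\iota_M(X)\,Y\big) = \pi_M\big(\iota_M(X)\big)\,Y = \mathsf{Q}_M(X)\,Y$. Finally $\mathsf{Q}_M(X) = \mathsf{Q}_M(\mathsf{P}_M(x)) = x$ since $\mathsf{P}_M$ is the inverse of $\mathsf{Q}_M$, and $Y = \mathsf{P}_M(y)$; combining the two steps gives $x \bDiamond y = \mathsf{Q}_M(XY) = x\,\mathsf{P}_M(y)$, which is the assertion.

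I expect the only genuine difficulty — the main obstacle — to be the first step, where the abstract algebra multiplication on $Z = M^+$ must be matched with the right $Z$-module structure on $M$. The product $\mathsf{P}_M(x)\mathsf{P}_M(y)$ is a priori opaque, and the cleanest way to make it concrete is exactly through the $\End_U(M)^{\mathrm{op}}$ realization, which turns it into a composition of explicit endomorphisms and hence into the module action. Once that compatibility is secured, the remaining steps are formal: they follow from the naturality built into $\pi_M$ and $\iota_M$ and from $\mathsf{P}_M$ being inverse to $\mathsf{Q}_M$.
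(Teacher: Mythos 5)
Your proof is correct and takes essentially the same route as the paper's: both arguments reduce to the single fact that $\mathsf{Q}_M\colon M^+\to M_-$ is a map of right $Z$-modules, after which $x\bDiamond y=\mathsf{Q}_M\big(\mathsf{P}_M(x)\mathsf{P}_M(y)\big)=\mathsf{Q}_M\big(\mathsf{P}_M(x)\big)\mathsf{P}_M(y)=x\,\mathsf{P}_M(y)$ follows from $\mathsf{Q}_M\circ\mathsf{P}_M=\Id$. The only difference is bookkeeping: you make the identification of the product in $Z=M^+$ with the right action explicit through the $\End_U(M)^{\mathrm{op}}$ realization of Lemma~\ref{lem:Z-realizations} and verify $Z$-linearity of $\pi_M$ directly, whereas the paper leaves that identification implicit and instead obtains the $Z$-linearity of $\mathsf{Q}_M$ from its naturality applied to right multiplication by $z\in Z$, viewed as a morphism in $\CC$.
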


\begin{proof}
Let $z\in Z$. Since $M$ is a $(U,Z)$-bimodule, the right action of $z$ on $M$ is a $U$-module endomorphism. By the functoriality of $(-)^+$ and $(-)_-$ this induces super vector space endomorphisms $z^+\colon M^+\to M^+$ and $z_-\colon M_-\to M_-$. Explicitly, $z^+(x+I)=xz+I$ for $x+I\in M^+$ and $z_-(x+\Romanbar{II})=xz'+\Romanbar{II}$ for $x+\Romanbar{II}\in M_-$, where $z'\in N$ is any representative of $z=z'+I\in Z$. By the naturality of $\mathsf{Q}$, we have a commutative diagram
\[\begin{tikzcd}
M^+ \arrow[d,"z^+"] \arrow[r,"\mathsf{Q}_M"]  & M_- \arrow[d,"z_-"]\\
M^+ \arrow[r,"\mathsf{Q}_M"] & M_-
\end{tikzcd}\]

In other words, $\mathsf{Q}_M\colon M^+\to M_-$ is a map of right $Z$-modules. With this in mind, and that $Z=M^+$, we have for all $x,y\in M_-$:
\[
x \bDiamond y 
= \mathsf{Q}_M\big(\mathsf{P}_M(x)\mathsf{P}_M(y)\big)
= \mathsf{Q}_M\big(\mathsf{P}_M(x)\big)\mathsf{P}_M(y)=
x\mathsf{P}_M(y).
\]
\end{proof}

\subsection{Generators for the algebra \texorpdfstring{$Z$}{Z}}

Since $\Fg$ is reductive in $\mathfrak{G}$, there exists a $\Fg$-module complement $\Fp$ of $\Fg$ in $\mathfrak{G}$, i.e. $\mathfrak{G}=\Fg\oplus\Fp$ as $\Fg$-modules. Consider the composition of maps

\begin{equation}\label{eq:p-to-Z}
\Fp\hookrightarrow \mathfrak{G} \hookrightarrow U\twoheadrightarrow U/\Romanbar{II}=M_- \stackrel{\mathsf{P}_M}{\longrightarrow} M^+ \cong Z
\end{equation}

Recall that $D^{-1}U(\Fh)\subset N$, hence $Z$ is a $D^{-1}U(\Fh)$-ring (meaning there is an algebra map $D^{-1}U(\Fh)\to Z$ whose image is not necessarily contained in the center of $Z$).

\begin{Lemma}\label{lem:generators}
The image of $\Fp$ under the map \eqref{eq:p-to-Z} generates $Z$ as a $D^{-1}U(\Fh)$-ring. Hence the image of $\Fp$ in $U/\emph{\Romanbar{II}}$ generates $U/\emph{\Romanbar{II}}$ as a $D^{-1}U(\Fh)$-ring with respect to the diamond product.
\end{Lemma}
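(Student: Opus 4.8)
The two assertions are equivalent, so I would prove whichever is more convenient. By construction $\mathsf{P}_M\colon M_-\to M^+=Z$ is an isomorphism of algebras (diamond product to ordinary product) which intertwines the two $D^{-1}U(\Fh)$-ring structures and carries the image of $\Fp$ in $M_-=U/\Romanbar{II}$ to its image under \eqref{eq:p-to-Z}. I would also record the elementary fact that the left $D^{-1}U(\Fh)$-action on $M_-$ coming from the ring structure agrees with left multiplication in $U$ followed by projection: this follows from $[\Fh,\Fg_+]\subseteq\Fg_+$ and $[\Fh,\Fg_-]\subseteq\Fg_-$, which give $h\,\Fg_-U\subseteq\Fg_-U$ and $h\,U\Fg_+\subseteq U\Fg_+$, so that left multiplication by $h$ respects $\Romanbar{II}$. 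Thus it suffices to show that the image $\overline{\Fp}$ of $\Fp$ generates $M_-$ as a $D^{-1}U(\Fh)$-ring under $\bDiamond$.

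First I would establish a PBW spanning statement. Fix an ordered homogeneous basis of $\mathfrak{G}=\Fg_-\oplus\Fh\oplus\Fp\oplus\Fg_+$ compatible with this decomposition, in that order, and apply the super PBW theorem to $U=D^{-1}U(\mathfrak{G})$. Any ordered monomial containing a factor from $\Fg_+$ then lies in $U\Fg_+$ (that factor is rightmost), and any ordered monomial whose leftmost factor lies in $\Fg_-$ lies in $\Fg_-U$; both are contained in $\Romanbar{II}$. Hence modulo $\Romanbar{II}$ only the classes of monomials in $\Fh$ and $\Fp$ survive, and after localization these have the form $\overline{h\,p^{(c)}}$ with $h\in D^{-1}U(\Fh)$ and $p^{(c)}$ an ordered $\Fp$-monomial. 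Consequently $M_-=\sum_{c}D^{-1}U(\Fh)\,\overline{p^{(c)}}$, and the $\Fp$-degree (the number of $\Fp$-factors) defines an exhaustive filtration $M_-^{[\le n]}=\sum_{|c|\le n}D^{-1}U(\Fh)\,\overline{p^{(c)}}$ with associated graded $\op{gr}M_-\cong D^{-1}U(\Fh)\otimes S(\Fp)$, where $S(\Fp)$ is the supersymmetric algebra.

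Let $A\subseteq M_-$ be the $D^{-1}U(\Fh)$-subring generated by $\overline{\Fp}$ under $\bDiamond$. Since the left action above preserves $A$, it is enough to show $\overline{p^{(c)}}\in A$ for every $c$, and for this I would pass to the associated graded. Using $x\bDiamond y=x\,\mathsf{P}_M(y)$ together with the fact that the extremal projector is (the action of) an element of a completion of $D^{-1}U(\Fg)$ and hence has $\Fp$-degree zero, so that $\mathsf{P}_M$ preserves the $\Fp$-degree, one checks $M_-^{[\le m]}\bDiamond M_-^{[\le n]}\subseteq M_-^{[\le m+n]}$. By the standard filtered-to-graded principle, $A=M_-$ will follow once the induced product on $\op{gr}M_-$ is generated in degree one over $D^{-1}U(\Fh)$.

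The heart of the matter is the symbol computation. For $p\in\Fp$ one has
\[
\overline{p}\bDiamond\overline{p^{(c)}}=\overline{p\,p^{(c)}}+\overline{p\,w},\qquad w=\mathsf{P}_M\big(\overline{p^{(c)}}\big)-p^{(c)}\in\Fg_-U+U\Fg_+,
\]
whose first term has symbol the supersymmetric product $p\cdot p^{(c)}\in S^{n}\Fp$; these products already span $\op{gr}^{[n]}$, so the task is to show that the equal-degree correction $\overline{p\,w}$ does not destroy surjectivity. The plan is to refine the filtration by a secondary weight-depth grading: the diamond product preserves $\Fh$-weight, and the projector tails $f^k(\cdots)e^k$ push weight toward the Cartan, so after reducing $\overline{p\,w}$ to the normal form of the second paragraph its contributions have strictly smaller weight-depth than $p\cdot p^{(c)}$, while the leading (maximal-depth) coefficient is a unit of $D^{-1}U(\Fh)$ of the form $1+(\text{terms carrying projector denominators})$. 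With respect to (weight, $\Fp$-degree, depth) the transition between the iterated diamond monomials $\overline{p_{l_1}}\bDiamond\cdots\bDiamond\overline{p_{l_r}}$ and the symmetric monomials is then triangular with unit diagonal, hence invertible, giving $\op{gr}A=\op{gr}M_-$ and therefore $A=M_-$. I expect the main obstacle to be precisely this last step: verifying the triangularity and the unit property of the diagonal coefficients requires pinning down the exact effect of the extremal projector on $\Fp$-monomials, and it is here that the explicit $\Fosp(1|2)$-projector and the choice of $D$ (so that the relevant denominators are invertible) must be used.
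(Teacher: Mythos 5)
There is a genuine gap here, and you have located it yourself: your whole argument funnels into the claim that, with respect to a ``(weight, $\Fp$-degree, depth)'' order, the transition between iterated diamond monomials and supersymmetric monomials is triangular with unit diagonal --- but ``depth'' is never defined, and the triangularity and unit property are asserted as a plan rather than proved. The issue is not minor: as you correctly observe, the projector corrections \emph{preserve} $\Fp$-degree (e.g.\ $\bar x_\al\bDiamond\bar x_\al=\overline{\tilde x_\al^2}-2\varphi_1(H+1)\,\overline{\tilde h\tilde x_{2\al}}$, both terms of degree $2$), so the induced product on $\op{gr}M_-$ is \emph{not} supersymmetric multiplication, and the surjectivity of $\op{gr}A\to\op{gr}M_-$ is exactly the statement to be proved; your filtered-to-graded reduction therefore buys nothing by itself. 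Your closing expectation that the explicit $\Fosp(1|2)$ projector and the specific choice of $D$ must enter is also off the mark: the lemma sits in the general Section 2 framework and its proof needs no rank-one computation.

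The gap closes with much less machinery, and with top coefficient exactly $1$, so no unit-diagonal verification is needed. For homogeneous $p\in\Fp$ and a $\Fp$-monomial $v$ of degree $k-1$, expand $\overline{p}\bDiamond\overline{v}=\overline{pPv}$ and push the $\Fg_-$-factors of the projector tail to the left (incompletely bracketed terms die in $\Fg_-U$) and the $\Fg_+$-factors to the right (dying in $U\Fg_+$); this yields $\overline{p}\bDiamond\overline{v}=\overline{pv}+\sum_{n\ge1}f_n(H)\,\overline{q_nv_n}$ with $f_n\in D^{-1}U(\Fh)$, $q_n=(\ad x_{-\al})^n(p)\in\Fp$, and $v_n\in U(\Fp)$ again of degree $k-1$: the corrections have the \emph{same} $\Fp$-degree but leading factor pushed strictly down the $(\ad x_{-\al})$-string. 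Since $\ad x_{-\al}$ acts nilpotently on the finite-dimensional $\Fp$, a double induction --- on $\Fp$-degree, and within fixed degree on the least $N$ with $(\ad x_{-\al})^N(p)=0$ --- puts every monomial class in the subring $A$; the base case $[p,x_{-\al}]=0$ has no corrections at all (compare \eqref{eq:2alphaDany}), and reordering inside $U(\Fp)$ only costs lower $\Fp$-degree terms because $[\Fp,\Fp]\subseteq\Fg$ and the $\Fg_\pm$-components die modulo $\Romanbar{II}$ after bracketing back into $\Fp$. For comparison, the paper's own proof is the terse two-step version of this: PBW for $\mathfrak{G}=\Fg_-\oplus\Fh\oplus\Fp\oplus\Fg_+$ shows $U(\Fg_-)\otimes D^{-1}U(\Fh)\otimes U(\Fp)$ surjects onto $U/I$ after discarding $U(\Fg_+)\Fg_+\subset I$, and then $P_M\Fg_-=0$ shows $D^{-1}U(\Fh)\otimes U(\Fp)$ surjects onto $M^+$; the leading-term induction above is left implicit there and is carried out concretely in the rank-one setting in Proposition \ref{prp:PBW}. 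So your PBW spanning step matches the paper, but the step you flag as the ``main obstacle'' is a real hole in your write-up, and it should be filled by the weight-descent induction rather than by the unproven graded triangularity.
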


\begin{proof}
By the PBW theorem applied to the decomposition $\mathfrak{G}=\Fg_-\oplus \Fh\oplus \Fp\oplus \Fg_+$, we have
\[
U \cong U(\Fg_-)\otimes D^{-1}U(\Fh) \otimes U(\Fp) \otimes U(\Fg_+).
\]
Writing $U(\Fg_+)=\C \oplus U(\Fg_+)\Fg_+$ and using that $U(\Fg_+)\Fg_+\subset I$ we see that 
 $U(\Fg_-)\otimes D^{-1}U(\Fh) \otimes U(\Fp)$ still maps onto $U/I$. On the other hand, since $P_M\Fg_-=0$, the subspace $D^{-1}U(\Fh) \otimes U(\Fp)$ of $U$ still maps onto $M^+$. This proves the claim.
\end{proof}

\subsection{Irreducibility of the \texorpdfstring{$Z$-modules $V^+$}{space of primitive vectors as Z-modules}}\label{sec:IrredZmod}

\begin{Proposition}
Let $V$ be an object of the category $\CC$.
If $v\in V^+$ generates $V$ as a $U$-module then $v$ generates $V^+$ as a $Z$-module. In particular, if $V$ is a simple $U$-module then $V^+$ is a simple $Z$-module.
\end{Proposition}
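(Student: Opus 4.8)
The plan is to prove the result using the extremal projector of $\CC$ together with the representability isomorphism of Lemma~\ref{lem:representable}. Since $V=Uv$ by hypothesis, every $w\in V^+$ can be written as $w=uv$ for some $u\in U$, and the goal reduces to showing that $w$ in fact lies in $Zv$, i.e.\ that there is some $z\in Z$ with $w=z\cdot v$. Here the $Z$-action on $V^+$ is the one induced by the $N$-action, namely $z\cdot w=nw$ for any representative $n\in N$ of $z=n+I$; this is well defined because $IV^+=0$ and lands in $V^+$ because $\Fg_+N\subseteq I$. Once we know every element of $V^+$ is a $Z$-multiple of $v$, both assertions follow.

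First I would fix $w\in V^+$, write $w=uv$, and recall from Lemma~\ref{lem:representable} the $U$-module map $\psi_v\colon M\to V$, $u'+I\mapsto u'v$, so that $\psi_v(u+I)=w$ and, crucially, $\psi_v$ restricts on $M^+=Z$ (Lemma~\ref{lem:Z-realizations}(i)) to the $Z$-action on $v$: for $z=n+I\in M^+$ one has $\psi_v(z)=nv=z\cdot v$ directly from the definitions. Next I would apply the extremal projector. Because $w\in V^+$, property~(i$'$) of Proposition~\ref{prp:extremal} gives $P_V(w)=w$, and the naturality of $P$ applied to the morphism $\psi_v$ yields
\[
w=P_V(w)=P_V\big(\psi_v(u+I)\big)=\psi_v\big(P_M(u+I)\big).
\]
Now $P_M(u+I)\in M^+=Z$ by property~(iii$'$), so setting $z=P_M(u+I)\in Z$ and using the identification of the previous step gives $w=\psi_v(z)=z\cdot v$. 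As $w\in V^+$ was arbitrary, $V^+=Zv$, which is the first claim.

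For the ``in particular'' statement I would take $V$ simple. If $V^+=0$ there is nothing to prove; otherwise pick any nonzero $v\in V^+$. Simplicity of $V$ forces $Uv=V$, so the first part shows that $v$ generates $V^+$ as a $Z$-module. Since this holds for every nonzero $v\in V^+$, the module $V^+$ has no proper nonzero $Z$-submodules and is therefore simple.

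The point requiring the most care is the transfer step: one must verify that the naturality square for $P$ at the morphism $\psi_v$ is exactly what converts the abstract projection $P_V$ on $V$ into multiplication by the concrete element $P_M(u+I)\in Z$, and that the restriction of $\psi_v$ to $M^+$ coincides with the $Z$-module structure on $V^+$ rather than merely some $U$-linear map. Both hinge on the explicit formula $\psi_v(u'+I)=u'v$ and on the identification $M^+=Z$; once these are pinned down the argument is purely formal, and it is the only place where the existence of the extremal projector (hence the standing assumption that $\CC$ has one) is genuinely used.
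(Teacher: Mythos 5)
Your proof is correct, but it takes a genuinely different route from the paper's. The paper argues homologically: since $v$ generates $V$, the map $\psi_v\colon M\to V$ is surjective with kernel $K$, and applying $\Hom_U(M,-)$ to $0\to K\to M\to V\to 0$ gives exactness of $\End_U(M)\to\Hom_U(M,V)\to\Ext^1_U(M,K)$; projectivity of $M$ (Lemma \ref{lem:M-is-projective}, itself proved using the extremal projector) kills the $\Ext^1$ term, and the identifications $\End_U(M)\cong Z^{\mathrm{op}}$ (Lemma \ref{lem:Z-realizations}) and $\Hom_U(M,V)\cong V^+$ (Lemma \ref{lem:representable}) convert the resulting surjectivity into surjectivity of $z\mapsto zv$. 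You instead inline the projector directly: writing $w=uv=\psi_v(u+I)$ and using the naturality $\psi_v\circ P_M=P_V\circ\psi_v$ together with properties (i') and (iii') of Proposition \ref{prp:extremal}, you obtain $w=\psi_v\bigl(P_M(u+I)\bigr)=P_M(u+I)\cdot v$ with $P_M(u+I)\in M^+=Z$. This is essentially the paper's proof of projectivity of $M$ specialized to the particular surjection $\psi_v$, so the two arguments rest on the same mechanism (the projector turns an arbitrary preimage into a singular vector); yours is more elementary and more explicit --- it exhibits the element $z=P_M(u+I)$ concretely and avoids the long exact sequence --- while the paper's buys the reusable structural fact that $M$ is projective in $\CC$, which is of independent interest. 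Both arguments need the standing hypotheses that $\CC$ has an extremal projector and that $M$ is an object of $\CC$ (so that naturality at $\psi_v$, respectively projectivity, applies), and you correctly identify the former as the crux; your verifications of the well-definedness of the $Z$-action on $V^+$ and of the compatibility of $\psi_v|_{M^+}$ with that action are exactly right. One cosmetic point you share with the paper: in the ``in particular'' step, the degenerate case $V^+=0$ that you set aside does not actually occur, since for a nonzero object of $\CC$ local $\Fg_+$-finiteness together with the nilpotency of $\Fg_+$ produces a nonzero singular vector.
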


\begin{proof}
Since $V$ is generated by $v$, the map $\psi_v$ is surjective. Let $K$ be the kernel of $\psi_v$.
The short exact sequence of $U$-modules
\[ 0 \to K \to M \to V \to 0 \]
gives rise to a long exact sequence
\[ 0\to \Hom_U(M,K)\to\Hom_U(M,M)\to\Hom_U(M,V)\to\op{Ext}^1_U(M,K)\to \cdots \]
Since $M$ is projective by Lemma \ref{lem:M-is-projective}, we have $\op{Ext}^1_U(M,K)=0$. Therefore the map $\Hom_U(M,M)\to\Hom_U(M,V)$ given by $z\mapsto \psi_v\circ z$ is surjective. Note that $\Hom_U(M,M)=\End_U(M)\cong Z^{\mathrm{op}}$ by Lemma \ref{lem:Z-realizations} and $\Hom_U(M,V)\cong V^+$ by Lemma \ref{lem:representable}. Under these identifications the map in question is simply the map $Z\to V^+$ given by $z\mapsto zv$. To say that this is surjective is equivalent to saying that $v$ generates $V^+$ as a $Z$-module.
\end{proof}

\section{Diagonal reduction algebra of \texorpdfstring{$\Fosp(1|2)$}{osp(1|2)}}
%as the image of $\delta: \Fosp(1|2) \rightarrow \mathfrak{G} = \Fosp(1|2) \times \Fosp(1|2), \enskip x \mapsto (x,x)$, for all $x$ in $\Fosp(1|2)$. 
Here we initiate a concrete exploration of the algebra $Z = Z(\FG, \Fg; D)$, where $\FG$ is the Lie superalgebra $\Fosp(1|2) \times \Fosp(1|2)$ and $\Fg$ is the image in $\FG$ of $\Fosp(1|2)$ under the diagonal embedding. The denominator set $D$ of Section \ref{sec:RedAlgLSA} is the multiplicative set generated by $\{ H - n \mid n \in \Z \}$, with $H = (h, h)$ in $\Fosp(1|2) \times \Fosp(1|2)$, chosen to allow for the existence of the extremal projector for $\Fg$ (see \cite{tolstoyExtremalProjectionsReductive1985, berezinGroupGrassmannStructure1981}) described in Section \ref{sec:extremal-projector-for-osp}. We investigate the structure of the superalgebra $Z$ by providing generators and relations and determining a PBW basis, with respect to the diamond ($\bDiamond$) product, to justify a complete presentation. All (super) vector spaces and (super)algebras will be considered as objects over the field of complex numbers, unless otherwise stated. Our basic references for Lie superalgebras are
\cite{frappatDictionaryLieAlgebras2000}, \cite{mussonLieSuperalgebrasEnveloping2012}, \cite{chengDualitiesRepresentationsLie2012}.

\subsection{The Lie superalgebra \texorpdfstring{$\Fosp(1|2)$}{osp(1|2)}}
\subsubsection{Definition}
The Type II basic classical Lie superalgebra $\Fosp(1|2)$ is the super vector space spanned by $\{x_{-2\al}, h, x_{2\al}; x_{-\al}, x_{\al}\}$ of dimension (3|2) (super dimension 1) preserving an even, nondegenerate, supersymmetric bilinear form on a super vector space of dimension (1|2) (superdimension -1).
Equivalently, let $\Fgl(1|2)$ be the set of all linear transformations on $\C^{1|2}$ expressed as matrices with respect to a standard basis $\{v_{0}; v_{1}, v_{2} \}$ of $\C^{1|2}$, 
	with even vector \[v_{0} = \begin{bmatrix} 1 \\ 0 \\ 0 \end{bmatrix}\] 
	and odd vectors
	
	\[v_{1} = \begin{bmatrix} 0 \\ 1 \\ 0 \end{bmatrix}, ~ v_{2} = \begin{bmatrix} 0 \\ 0 \\ 1 \end{bmatrix}. \] 
\textbf{}Elements of $\Fgl(1|2)$ are block matrices $\begin{bmatrix} a & r \\ c & A \end{bmatrix}$; here, $a$ is a scalar, $r$ is a row vector, $c$ is a column vector, and $A$ is a $2\times2$ square matrix. The orthosymplectic Lie superalgebra $\Fosp(1|2)$ is the Lie subsuperalgebra of $\Fgl(1|2)$ in which members subscribe to the conditions 
$a = 0$, $r = (r_{1},r_{2})$, $c = \begin{pmatrix} r_{2} \\ -r_{1} \end{pmatrix}$, and $A$ is an element of $\Fsl(2)$.
For the sake of completeness, we give the supercommutator relations on 
\[\Fosp(1|2) = \Fosp(1|2)_{\bar{0}} \oplus \Fosp(1|2)_{\bar{1}} = \left( \C x_{-2\al} \oplus \C h \oplus \C x_{2\al} \right) \oplus \left( \C x_{-\al} \oplus \C x_{\al} \right), \]
where,
\begin{gather}
	x_{-2\al} =\begin{bmatrix}0&0&0\\0&0&1\\0&0&0\end{bmatrix},\;
	x_{-\al}=\begin{bmatrix}0&0&1\\1&0&0\\0&0&0\end{bmatrix},\;
	h=\begin{bmatrix}0&0&0\\0&1&0\\0&0&-1\end{bmatrix},\;
	x_\al=\begin{bmatrix}0&1&0\\0&0&0\\-1&0&0\end{bmatrix},\;
	x_{2\al}=\begin{bmatrix}0&0&0\\0&0&0\\0&1&0\end{bmatrix},
\end{gather}
which are
  \begin{align}
	[h, x_{k\al}] &= -kx_{k\al}, \quad k\in\{\pm 1,\pm 2\} \label{comm-hx} \\
	[x_{\al}, x_{\al}] &= -2x_{2\al}, \label{comm-xalxal}\\ 
	[x_{-\al}, x_{-\al}] &= 2x_{-2\al} \label{comm-xnalxnal} \\
	[x_{\al}, x_{-2\al}] &= x_{-\al} \label{comm-xalxn2al}\\ 
	[x_{-\al}, x_{2\al}] &= x_{\al} \label{comm-xnalx2al}\\
	[x_{\al}, x_{-\al}] &= h \label{comm-xalxnal}\\
	[x_{-2\al}, x_{2\al}] &= h \label{comm-xn2alx2al}\\
	[x_{\al}, x_{2\al}] &= 0 \label{comm-xalx2al}\\ 
	[x_{-\al}, x_{-2\al}] &= 0 \label{comm-xnalxn2al}.
	\end{align}
%\begin{gather}
%	 [h,x_{k\al}]=-kx_{k\al}, \quad k\in\{\pm 1,\pm 2\},\\
%	[x_{\al}, x_{\al}] = -2x_{2\al}, \quad [x_{-\al}, x_{-\al}] = 2x_{-2\al},\\
%	[x_{\al}, x_{-2\al}] = x_{-\al}, \quad [x_{-\al}, x_{2\al}] = x_{\al},\\
%	[x_{\al}, x_{-\al}] = [x_{-2\al}, x_{2\al} ] = h,\\
%	[x_{\al}, x_{2\al}] = [x_{-\al}, x_{-2\al}] = 0.
%\end{gather}
%$\Fosp(1|2)_{\bar{0}}$
%
The even part $\Fosp(1|2)_{\bar{0}}$ of $\Fosp(1|2)$ is a Lie algebra isomorphic to $\Fsl(2)$, and the odd part  $\Fosp(1|2)_{\bar{1}}$ is isomorphic to the natural $\Fsl(2)$-module $\C^{2}$.   The Cartan subalgebra $\Fh$ of $\Fosp(1|2)$ is equal to the Cartan subalgebra of $\Fosp(1|2)_{\bar{0}}$, which is $\C h$.
In Figure \ref{fig:table1} we summarize some useful iterated brackets in $\Fosp(1|2)$ that will be used in connection with the extremal projector.

\setlength{\tabcolsep}{2pt}
\begin{figure}	
\centering
	\begin{tabular}{ccccc|c|ccccc}
		$[\,\cdot\,,x_{-\al}]^5$ &$[\,\cdot\,,x_{-\al}]^4$ &$[\,\cdot\,,x_{-\al}]^3$ &$[\,\cdot\,,x_{-\al}]^2$ &$[\,\cdot\,,x_{-\al}]$ &  &
		$[x_\al,\,\cdot\,]$ &
		$[x_\al,\,\cdot\,]^2$ &
		$[x_\al,\,\cdot\,]^3$ &$[x_\al,\,\cdot\,]^4$ &$[x_\al,\,\cdot\,]^5$ \\
		\hline
		& & & & $0$ & $x_{-2\al}$ & $x_{-\al}$ & $h$ & $x_\al$ & $-2x_{2\al}$ & $0$ \\
		& & & $0$ & $2x_{-2\al}$ & $x_{-\al}$ & $h$ & $x_\al$ & $-2x_{2\al}$ & $0$ &\\
		& & $0$ & $2x_{-2\al}$ & $x_{-\al}$ & $h$ & $x_\al$ & $-2x_{2\al}$ & $0$ & &\\
		& $0$ & $2x_{-2\al}$ & $x_{-\al}$ & $h$ & $x_\al$ & $-2x_{2\al}$ & $0$ & & &\\
		$0$ & $-2x_{-2\al}$ & $-x_{-\al}$ & $-h$ & $-x_\al$ & $x_{2\al}$ & $0$ & & & &
	\end{tabular}
\caption{A table of useful right and left adjoint actions of $\mathfrak{osp}(1|2)$ on itself.}
\label{fig:table1}
\end{figure}

\subsubsection{Odd roots of \texorpdfstring{$\Fosp(1|2)$}{osp(1|2)}}

The $BC_1$ root system $\Phi$ of $\Fosp(1|2)$ is given by the union of even (bosonic) roots $\Phi_{0} = \{\pm2\delta_{1} \}$ with odd (fermionic) roots $\Phi_{1} = \{\pm\delta_{1}\}$. Moreover, we choose a non-standard set of positive roots $\Phi^{+} = \{ -\delta_{1}, -2\delta_{1} \}$ associated to the base $\Pi = \{ -\delta_{1} \}$ for $\Phi$. Positive even (respectively, odd) roots, as well as their negative counterparts, are defined with the appropriate intersection with $\Phi_{0}$ (respectively, $\Phi_{1}$).  In particular, $\Fosp(1|2)$ has a lone positive odd root vector $x_{\al}$, for $\al = -\delta_{1}$ identified with $-1$.

It follows that $\Fosp(1|2)$ has a triangular decomposition: \[\Fosp(1|2) = \Fn_{-} \oplus \Fh \oplus \Fn_{+}, \text{ with } \Fn_{\pm} = \C x_{\pm 2\al} \oplus \C x_{\pm \al}.\]  

The Killing form on the Lie superalgebra $\Fosp(1|2)$ is non-degenerate, and it induces a bilinear form on $\dual{\Fh}$ such that $(\alpha, \alpha) = 1$.

\subsubsection{Diagonal embedding}

Throughout the rest of this section, $\Fg$ is a reductive embedding of $\Fosp(1|2)$ into $\mathfrak{G} = \Fosp(1|2) \times \Fosp(1|2)$ as the image of $\delta\colon  \Fosp(1|2) \rightarrow \mathfrak{G}$, $~x \mapsto (x,x)$, for all $x$ in $\Fosp(1|2)$.

To heed the remarks of the previous section, we provide a linear complement of $\Fg$: Namely,  \[ \Fp = \{(x,-x) \mid x \in \Fosp(1|2n)\}, \] 
which is the image of $\delta_{-}\colon  \Fosp(1|2) \rightarrow \mathfrak{G}$, $~x \mapsto (x,-x)$, for all $x$ in $\Fosp(1|2)$.
Then $\mathfrak{G} = \Fg \oplus \Fp$ as $\Fg$-modules. 
Even more, the map $\Fg \rightarrow \Fp$ sending $(x,x)$ to $(x,-x)$, for all $x$ in $\Fosp(1|2)$, is an isomorphism of $\Fg$-modules.

A root vector $x_{\be}$ of $\Fosp(1|2)$ will be identified as $X_{\be}$, respectively, $\tilde{x}_{\be}$, for its image in $\Fg \subset \mathfrak{G}$, respectively, in $\Fp \subset \mathfrak{G}$. That is, $X_{\be} = (x_{\be},x_{\be})$ and $\tilde x_{\be} = (x_{\be},- x_{\be})$. We also put $H$ for $(h,h)$ and $\tilde h = (h,-h)$.
These identifications remain in $\mathfrak{G} \hookrightarrow{} U$ after we fix the multiplicative set $D$ generated by products of factors $(H-m)^n$, where $m$ is an integer and $n$ is a natural number. 

Again for the sake of lucidity:  
The left ideal $I$ in $U$ is $U(\C X_{\al} + \C X_{2\al})$ and we write $\Romanbar{II}$ for the sum $(\C X_{-\al} + \C X_{-2\al})U + U(\C X_{\al} + \C X_{2\al})$  of subspaces.

%The algebra 
The algebra $N_{U}(U(\C X_{\al} + \C X_{2\al}))/U(\C X_{\al} + \C X_{2\al})$, denoted here as $Z(\mathfrak{G},\Fg)$, is the diagonal reduction algebra of $\Fosp(1|2)$ associated to the embedding  $\delta\colon  \Fosp(1|2) \subset \Fosp(1|2) \times \Fosp(1|2)$.

\subsubsection{Universal enveloping algebra and PBW basis}

An ordered basis for $\mathfrak{G}$ is the set 
\begin{gather} \{X_{-2\al}, X_{-\al}, H, \tilde{x}_{-2\al}, \tilde{x}_{-\al}, \tilde{h}, \tilde{x}_{\al}, \tilde{x}_{2\al}, X_{\al}, X_{2\al} \} \label{frakG-basis}.
	\end{gather}

The choice of ordered basis for $\mathfrak{G}$ is compatible with $\Romanbar{II}$.

A direct calculation shows that if $[x,y] = z$, then 
\begin{equation} \label{eq:gp-p}
	[X, \tilde{y}] = \tilde{z},\qquad
	[\tilde{x}, \tilde{y}] = Z,\qquad
	[X,Y] = Z,
\end{equation}
	for all root vectors $x$ and $y$ in $\Fosp(1|2)$. %$\Fg$ and $y$ in $\Fp$.

Considering the PBW basis theorem for Lie superalgebras with regards to \eqref{frakG-basis} gives a basis for the universal enveloping algebra $U(\mathfrak{G})$.
%in the ordered basis of $\mathfrak{G}$
That is, $U(\mathfrak{G})$ is spanned by a linearly independent set of monomials  where each monomial is of the form
\begin{gather}
X_{-2\al}^a X_{-\al}^b H^c \tilde{x}_{-2\al}^p \tilde{x}_{-\al}^q \tilde{h}^r \tilde{x}_{\al}^s \tilde{x}_{2\al}^t X_{\al}^d X_{2\al}^e \end{gather}

with the exponents $b, d, q, s$ no greater than 1 and  $a, c, e, p, r, t$ any natural number.

Thus, $U$ is a free left (right) $D^{-1}U(\Fh)$-ring.

\subsubsection{Anti-automorphism} \label{sec:anti-automorphism}
  In the aid of computation, we define a Lie superalgebra anti-automorphism of $\Fosp(1|2)$ given by:
   \begin{gather}
  	\theta(x_{\pm \al}) =\sqrt{-1} x_{\mp\al},\qquad \theta(x_{\pm 2\al}) = -x_{\mp 2\al},\qquad \theta(h)=h. 
  \end{gather}
We have
$\theta([x,y])=(-1)^{|x||y|}[\theta(y),\theta(x)]$, and $\theta$ is a map of super vector spaces. Thus, it extends to a superalgebra anti-homomorphism \[\Theta\colon  U \rightarrow U, \enskip \Theta(xy) =(-1)^{|x||y|} \Theta(y)\Theta(x). \]
Furthermore, $\Theta(\Romanbar{II})\subseteq\Romanbar{II}$ and hence $\Theta$ induces a linear endomorphism on $U/\Romanbar{II}$. In fact, $\Theta$ is an anti-automorphism of $U/ \Romanbar{II}$ with respect to the diamond product.

\subsection{Extremal projector for \texorpdfstring{$\Fosp(1|2)$}{osp(1|2)}}
\label{sec:extremal-projector-for-osp}
Refer to \cite{tolstoyExtremalProjectorsContragredient2011} for a general background on extremal projectors associated with various algebraic objects. 

For the convenience of the reader, we deduce the expression for the extremal projector of $\Fosp(1|2)$ using the fixed notation of the current section.

The Taylor extension $TU$ of $U$ can be defined as the projective limit
\begin{equation}
TU=\lim_{\longleftarrow} \frac{U}{\Fg_-^nU+U\Fg_+^n}.
\end{equation}
Define $P\in TU$ by
\begin{equation}
P=\sum_{n=0}^\infty \varphi_n(h)x_{-\al}^nx_\al^n
\end{equation}
where $\varphi_0(h)=1$ and for $n>0$, $\varphi_n(h)\in\C(h)$ are rational functions to be determined.
Introduce $\kappa_n(h)\in \C[h]$ by requiring in $U\big(\Fosp(1|2)\big)$:
\begin{equation}
x_\al x_{-\al}^n-(-1)^nx_{-\al}^nx_\al=[x_\al, x_{-\al}^n] = \kappa_n(h)x_{-\al}^{n-1},\quad n\ge 1;\qquad \kappa_0(h)=0.
\end{equation}
Explicitly, by induction and super Lebniz rule,
\begin{equation}
\kappa_n(h) = \sum_{k=0}^{n-1}(-1)^k(h-k)=
\begin{cases}
n/2,&\text{$n$ even,}\\
h-(n-1)/2,&\text{$n$ odd}.
\end{cases}
\end{equation}
We have
\begin{align*}
x_\al P 
&=\sum_{n=0}^\infty \varphi_n(h+1)x_\al x_{-\al}^nx_\al^n \\
&=\sum_{n=0}^\infty \Big(\varphi_n(h+1)(-1)^nx_{-\al}^nx_\al^{n+1} + \varphi_n(h+1)\kappa_n(h)x_{-\al}^{n-1}x_\al^n\Big) \\
&=\sum_{n=0}^\infty \Big((-1)^n\varphi_n(h+1) + \varphi_{n+1}(h+1)\kappa_{n+1}(h)\Big)x_{-\al}^n x_\al^{n+1}.
\end{align*}
Thus we see that $x_\al P=0$ if and only if
\begin{equation}
\varphi_n(h)=\frac{(-1)^n}{\kappa_n(h-1)}\varphi_{n-1}(h),\quad n\ge 1,
\end{equation}
or equivalently,
\begin{equation}
\varphi_{2n-1}(h)=\frac{-1}{h-n}\varphi_{2n-2}(h),\qquad
\varphi_{2n}(h)=\frac{1}{n}\varphi_{2n-1}(h),\qquad n\ge 1.
\end{equation}
Together with the initial condition $\varphi_0(h)=1$ this determines the rational functions $\varphi_n(h)$ uniquely.
The first few values of $\varphi_n(h)$ are
\begin{equation}
\varphi_0(h)=1,\quad
\varphi_1(h)=\varphi_2(h)=\frac{-1}{h-1},\quad
\varphi_3(h)=\frac{1}{(h-2)(h-1)},\quad
\varphi_4(h)=\frac{1}{2}\varphi_3(h).
\end{equation}
We see that the minimal multiplicative set $D$ satisfying the Ore conditions and such that $\varphi_n(h)\in D^{-1}U(\Fh)$ is the multiplicative submonoid of $U(\Fh)\setminus\{0\}$ generated by $\{(h-n)\mid n \in \Z \}$.

\subsection{Generators}

The diamond product on $U/\Romanbar{II}$ is defined as follows. For $u\in U$, let $\bar u=u+\Romanbar{II}\in U/\Romanbar{II}$. For all $u,v\in U$, define
\begin{equation}
	\bar u\bDiamond \bar v = uPv +\Romanbar{II}.\end{equation}
This is well-defined because $\Romanbar{II}P = 0 = P\Romanbar{II}$. This may also be expressed as follows:

\begin{equation}\label{eq:diamond-compute}
	\begin{aligned}
		\bar u\bDiamond\bar v 
		&= uv \\
		&+ [u,X_{-\al}]\cdot \varphi_1(H+1)\cdot [X_\al,v] \\
		&+ [[u,X_{-\al}],X_{-\al}]\cdot\varphi_2(H+2)\cdot[X_\al,[X_\al,v]] \\
		&+ \cdots + \Romanbar{II}.
	\end{aligned}
\end{equation}

\begin{Proposition}
	The algebra $ Z\left(\Fosp(1|2) \times \Fosp(1|2) , \Fosp(1|2)\right)$ is generated as a $D^{-1}U(\Fh)$-ring by the following elements:
	\begin{align*}
		P\tilde x_{2\al} +I &= \tilde x_{2\al} + I \\
		P\tilde x_\al +I&= \tilde x_\al -2 \varphi_1(H) X_{-\al} \tilde x_{2\al}+ I\\
		P\tilde{h}+I &= \tilde h + \varphi_1(H) X_{-\al} \tilde x_\al-2 \varphi_2(H) X_{-\al}^2 \tilde x_{2\al}+I\\
		P\tilde x_{-\al} +I&=\tilde x_{-\al} + \varphi_1(H)X_{-\al}\tilde h+\varphi_2(H)X_{-\al}^2\tilde x_\al-2\varphi_3(H)X_{-\al}^3\tilde x_{2\al}+I\\
		P\tilde x_{-2\al} +I&= \tilde x_{-2\al}+\varphi_1(H)X_{-\al}\tilde x_{-\al}+\varphi_2(H)X_{-\al}^2\tilde h+\varphi_3(H)X_{-\al}^3\tilde x_\al -2\varphi_4(H)X_{-\al}^4\tilde x_{2\al}+I
	\end{align*}
\end{Proposition}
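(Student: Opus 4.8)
The statement has two parts: that the five listed elements generate $Z$ as a $D^{-1}U(\Fh)$-ring, and that they admit the stated explicit expressions. The plan is to deduce the generation claim immediately from Lemma \ref{lem:generators}, and to obtain the explicit formulas by applying the extremal projector of the diagonal copy $\Fg$ to a basis of $\Fp$ and reducing modulo $I$.

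For the generation claim, recall that $\Fp$ has basis $\{\tilde x_{-2\al},\tilde x_{-\al},\tilde h,\tilde x_\al,\tilde x_{2\al}\}$. Under the realization $M^+\cong Z$ together with the formula $\mathsf{P}_M(u+\Romanbar{II})=Pu+I$ for the induced map on coinvariants, the composite \eqref{eq:p-to-Z} sends each basis vector $\tilde x_\be$ to $P\tilde x_\be+I$. Lemma \ref{lem:generators} asserts precisely that the image of $\Fp$ under \eqref{eq:p-to-Z} generates $Z$ as a $D^{-1}U(\Fh)$-ring; hence these five elements generate $Z$, and only the explicit formulas remain to be verified.

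For the formulas I would start from $P=\sum_{n\ge 0}\varphi_n(H)X_{-\al}^n X_\al^n$, the extremal projector of $\Fg$ written in the diagonal generators $X_{\pm\al},H$, and compute $P\tilde x_\be$ modulo $I=UX_\al+UX_{2\al}$. The key reduction is that, since $I$ is a left ideal containing $UX_\al$ and $X_\al$ is odd, commuting a factor $X_\al$ to the right past $\tilde x_\be$ deposits a term in $I$; a short induction on $n$ (using that $\Fp$ is $\ad X_\al$-stable, so each $(\ad X_\al)^k\tilde x_\be$ is again a tilde element) gives
\[
X_\al^n\tilde x_\be \equiv (\ad X_\al)^n(\tilde x_\be)\mod{I},
\]
the top commutator term having coefficient $1$ irrespective of the super signs. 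By \eqref{eq:gp-p} we have $(\ad X_\al)^n(\tilde x_\be)=\widetilde{(\ad x_\al)^n(x_\be)}$, whose values are read off the right half of Figure \ref{fig:table1}. Since $(\ad x_\al)^n(x_\be)=0$ once $n$ exceeds the length of the relevant $x_\al$-string (at most $5$), the a priori infinite sum truncates and gives a well-defined element of $U/I$:
\[
P\tilde x_\be \equiv \sum_{n\ge 0}\varphi_n(H)\,X_{-\al}^n\,\widetilde{(\ad x_\al)^n(x_\be)}\mod{I}.
\]
Substituting the tabulated values of $(\ad x_\al)^n(x_\be)$ for each $\be\in\{2\al,\al,0,-\al,-2\al\}$ (reading $\tilde h$ for the case $\be=0$) yields exactly the five displayed expressions.

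The computation is routine once the reduction identity is in hand, so I do not expect a genuine obstacle. The two points requiring care are: the termination of the tails of the sum, which is what guarantees that $P\tilde x_\be$ actually lands in $U$ and hence defines an element of $N/I=Z$; and the bookkeeping of super signs, which only affect the lower-order terms carrying a trailing factor of $X_\al$ and are therefore discarded modulo $I$, leaving the leading coefficients (and thus the final formulas) unchanged.
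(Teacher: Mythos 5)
Your proposal is correct and takes essentially the same route as the paper's own proof: generation comes from Lemma \ref{lem:generators}, and the formulas come from expanding $P(\tilde u+I)=\sum_{n\ge 0}\varphi_n(H)X_{-\al}^n(\ad X_\al)^n(\tilde u)+I$ and reading the iterated brackets off \eqref{eq:gp-p} and the right half of Figure \ref{fig:table1}. The only difference is that you make explicit two details the paper leaves implicit, namely the induction giving $X_\al^n\tilde x_\be\equiv(\ad X_\al)^n(\tilde x_\be)\mod{I}$ with unit leading coefficient and the truncation of the series after the fifth term.
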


\begin{proof}
Applying Lemma \ref{lem:generators}, $\big\{P(\tilde u + I)\mid \tilde u\in\{\tilde x_{-2\al}, \tilde x_{-\al}, \tilde h, \tilde x_{\al}, \tilde x_{2\al}\}\big\}$ yields a set of generators for $Z$. Furthermore, 
\[P(\tilde u+I) = \tilde u + \varphi_1(H)X_{-\al}[X_\al,\tilde u]+\varphi_2(H)X_{-\al}^2[X_\al,[X_\al,\tilde u]]+\cdots+I.\]
Now we may use the bracket \eqref{eq:gp-p} along with the right half of the table in Figure \ref{fig:table1}.
\end{proof}

We revisit the discussion of Section \ref{sec:IrredZmod} through a concrete example of an irreducible $Z$-module related to oscilator representations.
For each non-negative integer $\la$, there exists a finite-dimensional irreducible representation $V(\la)$ of $\mathfrak{osp}(1|2)$. The dimension of $V(\la)$ equals $2\la+1$ and this list exhausts all finite-dimensional irreducible representations up to equivalence. The spectrum of $h$ on $V(\la)$ equals $\{\la,\la-1,\ldots,-\la\}$. 
The associative superalgebra $\C[x]$, where $x$ is declared odd, carries an action of of $\Fosp(1|2)$ determined by $x_\al\mapsto \frac{1}{\sqrt{2}}\frac{\partial}{\partial x}$ and $x_{-\al}\mapsto\frac{1}{\sqrt{2}}x$. The spectrum of $h$ on $\C[x]$ is $\frac{1}{2}+\Z_{\ge 0}$.
Therefore the spectrum of $h$ on $\tilde V(\la)=\C[x]\otimes V(\la)$ is a subset of $\frac{1}{2}+\Z$ and hence the action of $U\big(\Fosp(1|2)\times\Fosp(1|2)\big)$ on $\tilde V(\la)$ extends to the localization $U$ (at all $(h-n)$ for $n\in\Z$).
Furthermore, $\tilde V(\la)$ is locally finite with respect to the action of $x_\al$, hence is an object of the category $\CC$.
This means that the space of primitive vectors ($\Fg_+$-invariants) $\tilde V(\la)^+$ is an irreducible representation of the diagonal reduction algebra $Z$.

\begin{Example}\label{ex:rep}
	The superspace $V = \C[x] \otimes \C^{1|2}$ is an object of $\CC$. Define the elements $w_{1} = 1 \otimes v_{2}$ and $w_{2} = 1 \otimes v_{0} + \sqrt{2} x \otimes v_{2}$ and $w_{3}=-x^2\otimes v_2+\sqrt{2} x\otimes v_0-1\otimes v_1$. One can show that $V^{+} = \Span\{w_{1},w_{2},w_3\}$ (see \cite{fergusonWeightModulesOrthosymplectic2015} for the case of $\Fosp(1|2n)$ with $n>1$). The irreducible submodules corresponding to higher-dimensional analogues of $w_1$ and $w_2$ were studied in \cite{williamsBasesInfinitedimensionalRepresentations2020}. Note that $w_2$ is even while $w_1$ and $w_3$ are odd.
    In the ordered basis $(w_2,w_1,w_3)$, the irreducible representation $\rho\colon Z\to\End_\C(V^+)$ is given by
	\begin{align*}
		\rho(\bar x_\al)&=\begin{bmatrix}0&0&-6\\ 2&0&\phantom{-}0\\ 0&0&\phantom{-}0\end{bmatrix},&
		\rho(\bar x_{2\al})&=\begin{bmatrix}0&0&0\\ 0&0&2\\ 0&0&0\end{bmatrix},&
		\rho(\bar h)&=\begin{bmatrix} \frac{9}{2}&0&\phantom{-}0\\ 0&\frac{3}{2}&\phantom{-}0\\ 0&0&-\frac{9}{2}\end{bmatrix},&\\
		\rho(\bar x_{-\al})&=\begin{bmatrix}0&2&0\\ 0&0&0\\ 6&0&0\end{bmatrix},&
		\rho(\bar x_{-2\al})&=\begin{bmatrix}0&0&0\\ 0&0&0\\ 0&2&0\end{bmatrix},&
		\rho(f(H))&=\begin{bmatrix}f(\frac{1}{2})&0&0\\0&f(-\frac{1}{2})&0\\0&0&f(\frac{3}{2})\end{bmatrix}.
	\end{align*}
	
\end{Example}

\subsection{Ordered monomials and PBW bases}

\subsubsection{Ordered diamond products}

We compute the ordered products we need in order to deduce the commutation relations in the next section.
We order the generators of the diagonal reduction algebra as follows.
\begin{equation}
	\bar x_{-2\al} < \bar x_{-\al} < \bar h < \bar x_\al < \bar x_{2\al}.
\end{equation}
We compute the lexicographically ordered diamond product of any two generators.
Put $\tilde y = \delta_-(y)$ for $y\in\{x_{\pm\al},x_{\pm 2\al},h\}$.

\begin{Lemma} \label{lem:DiamondToU}
	\begin{align}
		\bar y \bDiamond \bar x_{2\al} &=
		\tilde y\tilde x_{2\al}+\textup{\Romanbar{II}},\qquad\forall y\in \{x_{\pm\al}, x_{\pm 2\al}, h\}, \label{eq:anyD2alpha}\\ 
		\bar x_{-2\al}\bDiamond \bar y &= \tilde x_{-2\al}\tilde y+\textup{\Romanbar{II}},\qquad\forall y\in \{x_{\pm\al}, x_{\pm 2\al}, h\}, \label{eq:2alphaDany} \\
		\bar x_\al \bDiamond \bar x_\al &=(\tilde x_\al)^2 -2 \varphi_1(H+1)\tilde h\tilde x_{2\al}+\textup{\Romanbar{II}},
		\label{eq:alphaDalpha} \\
		\bar h \bDiamond \bar x_\al &=\tilde h\tilde x_\al-2\varphi_1(H)\tilde x_{-\al}\tilde x_{2\al}+\textup{\Romanbar{II}},
		\label{eq:hDalpha} \\
		\bar x_{-\al} \bDiamond \bar x_\al &=\tilde x_{-\al}\tilde x_\al - 4\varphi_1(H-1)\tilde x_{-2\al}\tilde x_{2\al}+\textup{\Romanbar{II}},
		\label{eq:-alphaDalpha} \\
		\bar h\bDiamond \bar h &=\tilde h^2 + \varphi_1(H)\tilde x_{-\al}\tilde x_\al-4\varphi_2(H)\tilde x_{-2\al}\tilde x_{2\al}+\textup{\Romanbar{II}},
		\label{eq:hDh} \\
		\bar x_{-\al} \bDiamond \bar h &=\tilde x_{-\al}\tilde h + 
		2\varphi_1(H-1)\tilde x_{-2\al}\tilde x_\al
		+\textup{\Romanbar{II}},
		\label{eq:-alDh}
		\\
		\bar x_{-\al}\bDiamond\bar x_{-\al} &=(\tilde x_{-\al})^2 +2 \varphi_1(H-1)\tilde x_{-2\al}\tilde h+\textup{\Romanbar{II}}.
		\label{eq:-alD-al}
	\end{align}
	
\end{Lemma}

\begin{proof}
	\eqref{eq:anyD2alpha} and \eqref{eq:2alphaDany} follow directly from the formula \eqref{eq:diamond-compute} since $[X_\al, \tilde x_{2\al}]=0$ and $[\tilde x_{-2\alpha}, X_{-\al}]=0$.
	For \eqref{eq:alphaDalpha} we have
	\begin{align*}
		\bar x_{\al} \bDiamond \bar \al &=
		(\tilde x_\al)^2 + 
		[\tilde x_\al,X_{-\al}]\varphi_1(H+1)[X_\al,\tilde x_\al] +\Romanbar{II}\nonumber\\
		&=(\tilde x_\al)^2 -2 \tilde h\varphi_1(H+1)\tilde x_{2\al}+\Romanbar{II}.
	\end{align*}
	The proofs of \eqref{eq:hDalpha}, \eqref{eq:-alphaDalpha}, and \eqref{eq:hDh} are similar. Application of $\Theta$ to \eqref{eq:hDalpha} and \eqref{eq:alphaDalpha} yields \eqref{eq:-alDh} and \eqref{eq:-alD-al}, respectively.
Complete details of the proof are found in Appendix A. 
\end{proof}

It will be useful to invert these equations.

\begin{Lemma} \label{lem:tildetodiam}
	\begin{align}
		\tilde y\tilde x_{2\al}+\textup{\Romanbar{II}} &= \bar y \bDiamond \bar x_{2\al}, \qquad\forall y\in \{x_{\pm\al}, x_{\pm 2\al}, h\},\label{eq:RanyD2alpha} \\ 
		\tilde x_{-2\al}\tilde y+\textup{\Romanbar{II}}&=\bar x_{-2\al}\bDiamond \bar y,\qquad\forall y\in \{x_{\pm\al}, x_{\pm 2\al}, h\}, \label{eq:R2alphaDany} \\
		\tilde x_\al\tilde x_\al+\emph{\Romanbar{II}} &= \bar x_\al \bDiamond \bar x_\al +2\varphi_1(H+1)\bar h\bDiamond \bar x_{2\al},\label{eq:RalphaDalpha}\\
		\tilde h\tilde x_\al +\emph{\Romanbar{II}}&= \bar h\bDiamond\bar x_\al +2\varphi_1(H)\bar x_{-\al}\bDiamond\bar x_{2\al},\label{eq:RhDalpha}\\
		\tilde x_{-\al}\tilde x_\al+\emph{\Romanbar{II}} &=
		\bar x_{-\al} \bDiamond \bar x_\al +4\varphi_1(H-1)\bar x_{-2\al}\bDiamond\bar x_{2\al},\\
		\tilde h\tilde h +\emph{\Romanbar{II}}&=
		\bar h\bDiamond\bar h-\varphi_1(H)\bar x_{-\al}\bDiamond\bar x_{\al}+4\big(\varphi_2(H)-\varphi_1(H)\varphi_1(H-1)\big)\bar x_{-2\al}\bDiamond \bar x_{2\al},\\
		\tilde x_{-\al}\tilde h+\textup{\Romanbar{II}}&=\bar x_{-\al}\bDiamond\bar h -2\varphi_1(H-1)\bar x_{-2\al}\bDiamond\bar x_\al,\\
		\tilde x_{-\al}\tilde x_{-\al}+\textup{\Romanbar{II}}&=\bar x_{-\al}\bDiamond\bar x_{-\al}-2\varphi_1(H-1)\bar x_{-2\al}\bDiamond\bar h.
	\end{align}
\end{Lemma}

\begin{proof}
	By \eqref{eq:anyD2alpha} and \eqref{eq:alphaDalpha},
	\begin{align*}
		\tilde x_\al\tilde x_\al +\Romanbar{II}&=\bar x_\al\bDiamond\bar x_\al+(2\varphi_1(H+1)\tilde h\tilde x_{2\al}+\Romanbar{II}) \\
		&=\bar x_\al\bDiamond \bar x_\al+2\varphi_1(H+1)\bar h\bDiamond \bar x_{2\al}.
	\end{align*}
	Continued manipulation of the relations in Lemma \eqref{lem:DiamondToU} gives the rest of the inversions. Complete details of the proof are found in Appendix A.
\end{proof}

\subsection{Presentation}
We are now able to deduce the relations.
\begin{Theorem}\label{thm:relations}
	The following relations hold in $U/\emph{\Romanbar{II}}$:
	\begin{subequations}\label{eq:thm-rels}
		\begin{align}
			f(H)\bDiamond g(H)&=g(H)\bDiamond f(H),\qquad\forall f(H),g(H)\in D^{-1}U(\Fh),\\
			\bar x_{k\al}\bDiamond f(H) &= f(H+k)\bDiamond\bar x_{k\al}, \qquad \forall k\in\{\pm 1, \pm 2\},\;\forall f(H)\in D^{-1}U(\Fh),\\
			\bar h\bDiamond f(H) &= f(H)\bDiamond\bar h,\qquad\forall f(H)\in D^{-1}U(\Fh), \label{eq:barhH}\\
			\bar x_{2\al}\bDiamond \bar x_{\al}
			&=\big(1-\frac{2}{H+1}\big)\bar x_\al\bDiamond \bar x_{2\al}
			\label{eq:2alphaDalpha}\\
      \bar x_\al\bDiamond \bar x_\al &= \frac{2}{H}\bar h\bDiamond \bar x_{2\al} \label{eq:alphaDalpha-rel}\\
      \bar x_{-\al}\bDiamond \bar x_{-\al} &= -\frac{2}{H-2}\bar x_{-2\al}\bDiamond \bar h \label{eq:-alphaD-alpha-rel}\\
			\bar x_{2\al}\bDiamond \bar h &=
			\Big(1-\frac{2}{H+1}\Big)\bar h\bDiamond \bar x_{2\al}
%			\Big(1+2\varphi_2(H+2)-2\varphi_1(H+2)\varphi_1(H+1)\Big)\bar h\bDiamond \bar x_{2\al} - \varphi_1(H+2) \bar x_\al \bDiamond \bar x_\al 
			\label{eq:2alphaDh} \\
			\bar x_{2\al}\bDiamond \bar x_{-\al} &=
			%\Big(1+2\varphi_3(H+2)
			%-2\varphi_1(H+2)\varphi_1(H)
			%-2\varphi_2(H+2)\varphi_1(H)\Big)
			\Big(1-\frac{2}{H(H-1)}\Big)
			\bar x_{-\al}\bDiamond \bar x_{2\al} 
			+\frac{2}{H+1} %\Big(-\varphi_1(H+2)-\varphi_2(H+2)\Big)
			\bar h\bDiamond \bar x_\al
			\label{eq:2alphaD-alpha}\\
			\bar x_{2\al}\bDiamond \bar x_{-2\al}&=
			\Big(1+2\frac{H^3 + H^2 - 6H + 4}{(H-2)(H-1)H(H+1)(H+2)}\Big)
%4\varphi_4(H+2)
%			+4\varphi_1(H+2)\varphi_1(H-1)
%			-4\varphi_3(H+3)\varphi_1(H-1)
%			\nonumber\\
%			&\quad
%			-4\varphi_2(H+2)\varphi_2(H)
%			+4\varphi_2(H+2)\varphi_1(H)\varphi_1(H-1) 
%			\Big)
			\bar x_{-2\al}\bDiamond \bar x_{2\al}
			\nonumber\\
			&\quad-\frac{H^2-H-1}{(H-1)H(H+1)}
%\Big( \varphi_1(H+2)-\varphi_3(H+2)+ \varphi_2(H+2)\varphi_1(H) \Big) 
\bar x_{-\al}\bDiamond \bar x_\al 
			+\frac{1}{H+1}\bar h\bDiamond \bar h 
			+\frac{-H^2}{H+1} %\big(-H-H\varphi_1(H+2)\big)
			\label{eq:2alphaD-2alpha}\\
			\bar x_\al \bDiamond \bar h &=
			\big(1-\frac{1}{H}\big)\bar h\bDiamond \bar x_\al  
%\nonumber\\
%			&\quad+ \Big(2\varphi_1(H)			+2\varphi_1(H+1)\varphi_1(H)-2\varphi_2(H+1)\Big)\bar x_{-\al}\bDiamond \bar x_{2\al}
			\label{eq:alphaDh} \\
			\bar x_\al \bDiamond \bar x_{-\al} &=
			%\Big(-1+\varphi_2(H+1)-\varphi_1(H+1)\varphi_1(H)\Big)
			\Big(-1+\frac{-1}{H-1}\Big)
			\bar x_{-\al}\bDiamond \bar x_\al %\nonumber\\
			+
%			\Big(-4\varphi_3(H+1)-4\varphi_1(H-1)+4\varphi_2(H+1)\varphi_1(H-1)+\nonumber\\
%			&\quad+4\varphi_1(H+1)\varphi_2(H)-4\varphi_1(H+1)\varphi_1(H)\varphi_1(H-1)\Big)
   		\frac{4H}{(H-1)(H-2)}\bar x_{-2\al}\bDiamond \bar x_{2\al} 
			-\frac{1}{H}\bar h\bDiamond\bar h 
			+H
			\label{eq:alphaD-alpha} \\
			\bar x_\al \bDiamond \bar x_{-2\al} & = 
			%\Big(1+2\varphi_3(H+1)-2\varphi_1(H+1)\varphi_1(H-1)
			%-2\varphi_2(H+1)\varphi_1(H-1)\Big)
      \Big(1-\frac{2}{(H-1)(H-2)}\Big)\bar x_{-2\al}\bDiamond\bar x_\al 
			-\frac{2}{H}
%			\big(\varphi_1(H+1)+\varphi_2(H+1)			\big)
			\bar x_{-\al}\bDiamond \bar h
			\label{eq:alphaD-2alpha}\\
			\bar h \bDiamond \bar x_{-\al} &=
			\big(1-\frac{1}{H-1}\big)\bar x_{-\al}\bDiamond \bar h 
%\nonumber\\
%			&\quad+			\Big(-2\varphi_1(H-1)-2\varphi_1(H)\varphi_1(H-1) +2\varphi_2(H)\Big)\bar x_{-2\al}\bDiamond \bar x_{\al}
			\label{eq:hD-alpha}\\
			\bar h \bDiamond \bar x_{-2\al} &=
			\Big(1-\frac{2}{H-1}\Big)\bar x_{-2\al}\bDiamond \bar h
			\label{eq:hD-2alpha}\\
			\bar x_{-\al} \bDiamond \bar x_{-2\al} &=\big(1-\frac{2}{H-2}\big)\bar x_{-2\al}\bDiamond \bar x_{-\al}
			\label{eq:-alphaD-2alpha}
		\end{align}
	\end{subequations}
\end{Theorem}

\begin{proof}
	Put $\tilde y = \delta_-(y)$ for $y\in\{x_{\pm\al},x_{\pm 2\al},h\}$.
	Putting $h=x_0$ for a moment, note that for any $\beta,\gamma\in\{\pm \al, \pm 2\al, 0\}$ such that $\beta+\gamma\neq 0$ we have
	\begin{equation}
		\tilde x_\beta \tilde x_\gamma + \Romanbar{II}=(-1)^{|\beta||\gamma|}\tilde x_\gamma\tilde x_\beta + \Romanbar{II}.
	\end{equation}
	This is due to the fact that, in $U$, we have $[\tilde x_\beta, \tilde x_\gamma] \in \C X_{\beta+\gamma}\subseteq \Romanbar{II}$ when $\beta+\gamma\neq 0$.
	%In what follows this will be used many times.
	%
	%The table in Figure \ref{fig:table1} is useful.
	%
	Thus, we have
	\begin{align*}
		\bar x_{2\al}\bDiamond \bar x_{\al} &=
		\tilde x_{2\al}\tilde x_\al + [\tilde x_{2\al},X_{-\al}]\varphi_1(H+1)[X_\al,\tilde x_\al] +\Romanbar{II} \\
		&=\tilde x_\al \tilde x_{2\al}
		+2\varphi_1(H+2)\tilde x_\al\tilde x_{2\al}+\Romanbar{II}\\
		&=\big(1+2\varphi_1(H+2)\big)\bar x_\al\bDiamond \bar x_{2\al},
	\end{align*}
	where we used \eqref{eq:anyD2alpha}.

For a complete proof of the remaining relations, see Appendix A.
\end{proof}

\subsection{PBW basis for \texorpdfstring{$Z$}{Z}}

By the PBW theorem for $U(\mathfrak{G})$, it is immediate that $U/\Romanbar{II}$ is a free left $D^{-1}U(\Fh)$-module on
\begin{equation}
	\{\tilde x_{-2\al}^p\tilde x_{-\al}^q
	\tilde h^r\tilde x_{\al}^s\tilde x_{2\al}^t + \Romanbar{II}
	\mid p,q,r,s,t\in\Z_{\ge 0},\, q,s\le 1\}.
\end{equation}

\begin{Proposition} \label{prp:PBW}
	$U/\textup{\Romanbar{II}}$ is a free left $D^{-1}U(\Fh)$-module on the following set of monomials with respect to the diamond product:
	\begin{equation}
		\{\bar x_{-2\al}^{\bDiamond p}\bDiamond \bar x_{-\al}^{\bDiamond q}\bDiamond \bar h^{\bDiamond r}\bDiamond \bar x_\al^{\bDiamond s}\bDiamond \bar x_{2\al}^{\bDiamond t}
		\mid p,q,r,s,t\in\Z_{\ge 0},\, q,s\le 1\}.
	\end{equation}
\end{Proposition}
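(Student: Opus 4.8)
The plan is to establish the PBW property by the standard two-part argument: first that the proposed diamond monomials \emph{span} $U/\textup{\Romanbar{II}}$ as a left $D^{-1}U(\Fh)$-module, and second that they are \emph{linearly independent}. For the spanning part, I would exploit the fact, already recorded in the excerpt, that $U/\textup{\Romanbar{II}}$ is free over $D^{-1}U(\Fh)$ on the \emph{ordinary} (non-diamond) monomials $\tilde x_{-\al}^p\tilde x_{-2\al}^q\tilde h^r\tilde x_{2\al}^s\tilde x_\al^t$ with $p,t\le 1$. The key idea is to show that the diamond monomial $\bar x_{-2\al}^{\bDiamond p}\bDiamond\bar x_{-\al}^{\bDiamond q}\bDiamond\bar h^{\bDiamond r}\bDiamond\bar x_\al^{\bDiamond s}\bDiamond\bar x_{2\al}^{\bDiamond t}$ equals the corresponding ordinary monomial (in the matching order $\tilde x_{-2\al}^p\tilde x_{-\al}^q\tilde h^r\tilde x_\al^s\tilde x_{2\al}^t$) plus a $D^{-1}U(\Fh)$-combination of ordinary monomials that are strictly \emph{lower} in a suitable filtration. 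Formula \eqref{eq:diamond-compute} makes this transparent: each diamond factor inserts $uPv$, whose leading term is $uv$ and whose correction terms $[u,X_{-\al}]^{\bDiamond k}\varphi_k(\cdots)[X_\al,v]^{\bDiamond k}$ trade one unit of $\tilde x_{2\al}$-type weight for raised $X_{-\al}$ factors, hence lower the relevant weight or degree. This unitriangular relationship between the two monomial families, together with the known freeness on the ordinary monomials, gives spanning immediately.

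For linear independence, I would invert the same unitriangular change of basis. Because the transition matrix from diamond monomials to ordinary monomials is upper-unitriangular with entries in $D^{-1}U(\Fh)$ (diagonal entries equal to $1$), it is invertible over the ring $D^{-1}U(\Fh)$, so the diamond monomials are also a free generating set. Concretely, I would argue that any $D^{-1}U(\Fh)$-linear dependence among the diamond monomials, after expanding each via \eqref{eq:diamond-compute} into ordinary monomials and comparing the top-filtration terms, forces all coefficients to vanish, using the established independence of the ordinary monomials as the base case of an induction on the filtration degree.

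To make both halves rigorous I would fix a grading or filtration that the correction terms strictly decrease. A natural choice is to weight by the total $\al$-degree together with a secondary bookkeeping of the $\tilde x_{2\al}$ and $\tilde x_\al$ exponents (equivalently, the $\Fh$-weight combined with a length count), arranged so that the substitution $\tilde x_{k\al}\tilde x_{l\al}\mapsto$ (lower) coming from $P$ is strictly filtration-decreasing; the relations in Theorem~\ref{thm:relations} confirm that every reordering and every $P$-correction lands in strictly lower filtration degree while the coefficients stay in $D^{-1}U(\Fh)$. The constraints $q,s\le 1$ on the diamond side (versus $p,t\le 1$ on the ordinary side) reflect the oddness of $x_{\pm\al}$ and the reversal of order, and I would check that the count of admissible diamond monomials matches that of ordinary monomials in each filtration piece, so that the unitriangular map is a bijection of bases degree-by-degree.

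The main obstacle will be setting up the filtration precisely enough that the correction terms in \eqref{eq:diamond-compute} are provably strict decreases \emph{simultaneously} with respect to both the reordering of commuting-up-to-sign factors and the $P$-insertions, across all five generators at once. In particular, one must verify that raising $X_{-\al}$-powers (which enter the coefficients via $\varphi_k(H)$, hence live in $D^{-1}U(\Fh)$ after rewriting) never re-enters the monomial part and spoils unitriangularity; this is where the explicit normal-ordering computations of the preceding two lemmas, and the ordered relations \eqref{eq:2alphaDalpha}--\eqref{eq:-alphaD-2alpha}, do the real work.
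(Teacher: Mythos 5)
Your proposal is correct and takes essentially the same route as the paper: spanning follows from the relations of Theorem~\ref{thm:relations}, and linear independence from the observation that each diamond monomial equals the corresponding ordered tilde-monomial $\tilde x_{-2\al}^p\tilde x_{-\al}^q\tilde h^r\tilde x_\al^s\tilde x_{2\al}^t$ plus lexicographically lower terms, i.e.\ a unitriangular change of basis over $D^{-1}U(\Fh)$ from the ordinary PBW basis. The paper compresses this into a one-line induction on the order $x_{-2\al}<x_{-\al}<h<x_\al<x_{2\al}$, whereas you spell out the filtration bookkeeping explicitly, which the paper leaves implicit.
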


\begin{proof}
	By Theorem \ref{thm:relations}, the set spans $U/\Romanbar{II}$. By induction, each diamond monomial can be written 
	%The extremal projector $P_M$ is an algebra isomorphism from $U/\Romanbar{II}$ to $N/I$.
	%Applying $P_M$ to one of the diamond monomials we get the corresponding monomial in $N/I$:
	\begin{equation}
		\tilde x_{-2\al}^p\tilde x_{-\al}^q
		\tilde h^r\tilde x_\al^s\tilde x_{2\al}^t + \text{(lower terms)} +\Romanbar{II}
	\end{equation}
	where we order the monomials lexicographically with respect to $x_{-2\al}<x_{-\al}<h<x_\al<x_{2\al}$.
\end{proof}

\subsection{Main theorem}

\begin{Theorem}
	Let $D$ be the multiplicative subset of $U\big(\Fosp(1|2)\times\Fosp(1|2)\big)$ generated by $\{H-n\mid n\in\Z\}$ where $H=h\otimes 1+1\otimes h$ and let $Z$ be the diagonal reduction algebra $Z\big(\Fosp(1|2)\times\Fosp(1|2),\Fosp(1|2)\big)$.
	Then $Z$ is generated as a $\C$-algebra by $D^{-1}U(\Fh)\cup\{\bar x_{-2\al},\bar x_{-\al},\bar h,\bar x_\al,\bar x_{2\al}\}$ subject to relations \eqref{eq:thm-rels}.
\end{Theorem}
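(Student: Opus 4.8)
The plan is to realize the Main Theorem as the statement that a canonical surjection is an isomorphism, via a PBW-straightening argument. Let $\tilde Z$ denote the associative $\C$-algebra generated by (a copy of) $D^{-1}U(\Fh)$ together with symbols $\bar x_{-2\al},\bar x_{-\al},\bar h,\bar x_\al,\bar x_{2\al}$, modulo the two-sided ideal generated by the relations \eqref{eq:thm-rels}, where the copy of $D^{-1}U(\Fh)$ retains its own multiplication. By Lemma \ref{lem:generators} these elements generate $Z$, and by Theorem \ref{thm:relations} they satisfy \eqref{eq:thm-rels} in $Z$; hence there is a surjective algebra homomorphism $\pi\colon\tilde Z\twoheadrightarrow Z$ that restricts to the identity on $D^{-1}U(\Fh)$ and sends each symbol to the corresponding generator. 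Since $\pi$ fixes $D^{-1}U(\Fh)$, it is in particular left $D^{-1}U(\Fh)$-linear.

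It then suffices to prove $\pi$ is injective, and for this I would show that the ordered diamond monomials
\[
\bar x_{-2\al}^{\bDiamond p}\bDiamond\bar x_{-\al}^{\bDiamond q}\bDiamond\bar h^{\bDiamond r}\bDiamond\bar x_\al^{\bDiamond s}\bDiamond\bar x_{2\al}^{\bDiamond t},\qquad p,q,r,s,t\in\Z_{\ge 0},\ q,s\le 1,
\]
already span $\tilde Z$ as a left $D^{-1}U(\Fh)$-module. Granting this, their images under $\pi$ are exactly the monomials of Proposition \ref{prp:PBW}, which form a free $D^{-1}U(\Fh)$-basis of $Z$. A surjection carrying a spanning set onto a linearly independent set is automatically injective: writing an arbitrary kernel element as $\sum_i a_i m_i$ with $a_i\in D^{-1}U(\Fh)$ and $m_i$ the standard monomials, the relation $\sum_i a_i\pi(m_i)=0$ forces every $a_i=0$ by freeness downstairs, whence the element vanishes. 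Thus $\pi$ is an isomorphism and the presentation \eqref{eq:thm-rels} is complete.

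It therefore remains to establish the spanning claim inside $\tilde Z$. First I would use the two families of Cartan relations, namely $f(H)\bDiamond g(H)=g(H)\bDiamond f(H)$ and $\bar x_{k\al}\bDiamond f(H)=f(H+k)\bDiamond\bar x_{k\al}$, to move every coefficient from $D^{-1}U(\Fh)$ to the far left, reducing any element to a $D^{-1}U(\Fh)$-combination of pure words in the five symbols. Each of the remaining relations \eqref{eq:2alphaDalpha}--\eqref{eq:-alphaD-2alpha} has left-hand side either an out-of-order adjacent pair $\bar x_\beta\bDiamond\bar x_\gamma$ with $\beta>\gamma$ in the order $\bar x_{-2\al}<\bar x_{-\al}<\bar h<\bar x_\al<\bar x_{2\al}$, or a forbidden odd square $\bar x_{\pm\al}^{\bDiamond 2}$, and expresses it as a $D^{-1}U(\Fh)$-combination of standard terms. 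Since the tabulated relations cover every out-of-order pair and both odd squares, repeated application straightens any word into ordered form.

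The main obstacle is termination: I must exhibit a well-founded monomial order under which each right-hand side of \eqref{eq:2alphaDalpha}--\eqref{eq:-alphaD-2alpha} is strictly smaller than its left-hand side, guaranteeing that the straightening process halts and genuinely lands in the span of the standard monomials. This is precisely the order and induction already carried out in the proof of Proposition \ref{prp:PBW}---lexicographic in $x_{-2\al}<x_{-\al}<h<x_\al<x_{2\al}$, refined by total degree---where each diamond monomial was shown to equal its leading ordered monomial plus strictly lower terms. Read now as identities in $\tilde Z$ rather than in $U/\Romanbar{II}$, the same estimates deliver termination and hence the spanning statement. I emphasize that no confluence (Diamond Lemma) verification is required here: spanning upstairs, combined with the independently established basis of $Z$ from Proposition \ref{prp:PBW}, already forces injectivity.
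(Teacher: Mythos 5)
Your proposal is correct and takes essentially the same route as the paper's own proof: both form the abstract algebra on $D^{-1}U(\Fh)\cup\{\bar x_{-2\al},\bar x_{-\al},\bar h,\bar x_\al,\bar x_{2\al}\}$ modulo \eqref{eq:thm-rels}, obtain the surjection onto $Z$ from Theorem \ref{thm:relations}, use the relations to straighten any element into a left $D^{-1}U(\Fh)$-combination of ordered diamond monomials, and then conclude injectivity from the freeness of those monomials in $Z$ established in Proposition \ref{prp:PBW}. Your extra remarks---verifying that the relation list covers every out-of-order pair and both odd squares, exhibiting the well-founded order for termination, and noting that no Diamond-Lemma confluence check is needed since linear independence is proved downstream---merely make explicit what the paper's brief induction leaves implicit.
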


\begin{proof}
	Let $A$ be the $\C$-algebra generated by 
	$D^{-1}U(\Fh)\cup\{\bar x_{-2\al},\bar x_{-\al},\bar h,\bar x_\al,\bar x_{2\al}\}$ modulo relations \eqref{eq:thm-rels}. By Theorem \ref{thm:relations}, there exists a surjective $\C$-algebra homomorphism $\varphi:A\to Z$. Suppose $a\in A$ belongs to the kernel of this map. Using the relations \eqref{eq:thm-rels} we map write $a$ as a linear combinations of ordered monomials with coefficients on the left from $D^{-1}U(\Fh)$. Applying $\varphi$ we get a corresponding linear combination of ordered monomials in $Z$. But by Proposition \ref{prp:PBW} these are linearly independent over $D^{-1}U(\Fh)$, so all the coefficients are zero. Thus $a=0$. This proves that $\varphi$ is an isomorphism.
\end{proof}

%\section{Representation theory: Polynomials tensor standard representation}

\section{Applications}

\subsection{Dilation automorphisms of the reduction algebra}

\begin{Proposition}
Any automorphism $\tau$ of the diagonal reduction algebra of $\Fosp(1|2)$ of the form
$\tau(\bar x_\be)=k_\be \bar x_\be$, $\tau(\bar h)=k_h \bar h$, and $\tau(H)=H$ is given by
$k_h=\epsilon\in\{\pm 1\}$, $k_\al=\xi$, for some nonzero $\xi\in\C$, $k_{-\al}=\xi^{-1}$, $k_{2\al}=\epsilon \xi^2$, and $k_{-2\al}=\epsilon \xi^{-2}$.
Conversely, for any $\epsilon\in\{\pm 1\}$ and any nonzero $\xi\in\C$, there is an automorphism $\tau$ given by
$\tau(\bar x_{\pm \al})=\xi^{\pm 1} \bar x_{\pm \al}$, $\tau(\bar h)=\epsilon \bar h$, $\tau(\bar x_{\pm 2\al}) = \epsilon \xi^{\pm 2}\bar x_{\pm 2\al}$, and $\tau(H)=H$.
The group of these dilation automorphisms is isomorphic to $(\Z/2\Z)\times \C^{\ast}$.
\end{Proposition}

\begin{proof}
Using \eqref{eq:alphaD-alpha} to write $\tau(\bar x_\al\bDiamond \bar x_{-\al}) - k_\al k_{-\al}\bar x_\al\bDiamond \bar x_{-\al}$ in the PBW basis, the PBW Theorem (Proposition \ref{prp:PBW}) implies that $k_\al k_{-\al} = k_{2\al}k_{-2\al} = k_h^2 = 1$. So $k_h=\epsilon$ for some $\epsilon\in\{\pm 1\}$.
By \eqref{eq:alphaDalpha-rel}, we get $k_\al^2 = k_h k_{2\al}$. Since $k_h^2=1$, the previous equation\ implies $k_{2\al}=k_h k_\al^2$.
%
%By \eqref{eq:alphaD-2alpha}, we get $k_\al k_{-2\al} = k_{-\al} k_h$.
Conversely, we encourage the reader to verify that any such $\tau$ preserves the relations in Theorem \ref{thm:relations} as we omit the calculations here.
These automorphisms commute (like complex numbers under multiplication) and are uniquely determined by a choice of $\epsilon \in \{\pm 1\}$ and a complex unit $\xi \in \C^{\ast}$, proving the final statement. 
\end{proof}

\begin{Example} The square $\Theta^2$ of the anti-automorphism $\Theta$ (Section \ref{sec:anti-automorphism}) is an automorphism of the form above corresponding to $\epsilon=1$ and $\xi=-1$.
\end{Example}

\subsection{Linear Casimir}

Put
\begin{equation}
C^{(1)}= \hat h = (H-1)\bar h.
\end{equation}
Use of relations \eqref{eq:barhH} and \eqref{eq:alphaDh} show $C^{(1)}$ commutes with $\bar h$, $H$, and $\bar x_\al$. Similarly, $C^{(1)}$ commutes with $\bar x_{2\al}$.
By applying the anti-automorphism $\Theta$, we see $C^{(1)}$ is a central element as it also commutes with $\bar x_{-\al}$ $\bar x_{-2\al}$. 

\begin{Example}
Continuing Example \ref{ex:rep}, we can check
$\rho(C^{(1)})=\begin{bmatrix}-\frac{9}{4} & \phantom{-}0 & \phantom{-}0\\\phantom{-}0 &-\frac{9}{4} & \phantom{-}0\\ \phantom{-}0 & \phantom{-}0 & -\frac{9}{4}
\end{bmatrix}$.
Precisely, $\rho(C^{(1)})= \rho\left((H-1)\bar{h}\right) = \begin{bmatrix}-\frac{1}{2}&\phantom{-}0&0\\\phantom{-}0&-\frac{3}{2}&0\\\phantom{-}0&\phantom{-}0&\frac{1}{2}\end{bmatrix}\begin{bmatrix}\frac{9}{2}&0&0\\0&\frac{3}{2}&0\\0&0&-\frac{9}{2}\end{bmatrix} =
\begin{bmatrix}-\frac{9}{4} & \phantom{-}0 & \phantom{-}0\\\phantom{-}0 &-\frac{9}{4} & \phantom{-}0\\ \phantom{-}0 & \phantom{-}0 & -\frac{9}{4} 
\end{bmatrix}$.
\end{Example}

\subsection{Quadratic anti-Casimir}
Recall that an even element is called \emph{anti-central} if it commutes with even elements and anti-commutes with odd elements.

\begin{Lemma}\label{lem:QuadAnsatz}
Consider the following ansatz of a quadratic anti-central element in $U/\emph{\Romanbar{II}}$: %$U(\Fosp(1|2))$ of zero root degree
\begin{equation}
C^{(2)} = f_2(H) \bar x_{-2\al} \bDiamond \bar x_{2\al} + f_1(H) \bar x_{-\al} \bDiamond \bar x_{\al} + f_0(H)\hat h \bDiamond \hat h+g(H).
\end{equation}
 %of a quadratic anti-central element in $U(\Fosp(1|2))$ of zero root degree, 
Then $C^{(2)}$ is anti-central if and only if the following equations hold:
 \begin{align}
f_1(H-1)&=\frac{H-1}{H-2}f_1(H)+\frac{H-2}{H-1}f_2(H) \label{eq:f1}  \\
f_2(H-1)&=-\frac{4(H-1)}{(H-2)(H-3)}f_1(H)-\frac{H-1}{H-3}f_2(H) \label{eq:f2} \\
f_0(H-1)&=-f_0(H)+\frac{1}{(H-1)(H-2)^2}f_1(H) \label{eq:f0} \\
g(H-1)&=-g(H)-f_1(H)(H-1) \label{eq:g}
\end{align}

\end{Lemma}

\begin{proof}
    The details of the proof are found in Appendix B.
\end{proof}

Next we will show existence and uniqueness (up to complex scalar multiple) of the above system of equations. 

Consider the following element of $U(\Fosp(1|2))$ from \cite{lesniewskiRemarkCasimirElements1995}: \footnote{The following converts the basis of $\Fosp(1|2)$ of Le\'{s}niewski to ours:  $L_3=\frac{1}{2}h,\, G_\pm = -\frac{\sqrt{-1}}{2} x_{\mp\al},\, L_\pm = -x_{\mp 2\al}$.}

\begin{align}
L &=-\frac{1}{2}(x_\al x_{-\al}-x_{-\al}x_\al -\frac{1}{2})\nonumber \\
&=x_{-\al}x_\al - \frac{1}{2}h+\frac{1}{4}
\end{align}
One can check that 
\begin{equation}
L x_\al + x_\al L= 0.
\end{equation}

Like so, using \eqref{comm-xalxnal}, \eqref{comm-hx}: 
\begin{align*}
L x_{\al} + x_{\al} L &=  x_{-\al}x_{\al}x_{\al} - \frac{1}{2}hx_{\al}+\frac{1}{4}x_{\al} + x_{\al}x_{-\al}x_{\al} - \frac{1}{2}x_{\al}h+\frac{1}{4}x_{\al}\\
& =  x_{-\al}x_{\al}x_{\al} - x_{-\al}x_{\al}x_{\al} + hx_{\al} - hx_{\al} - \frac{1}{2}x_{\al} + \frac{1}{2}x_{\al}\\
& = 0.
\end{align*}

Since $L$ is fixed by the anti-automorphism $\theta$, $L$ also anti-commutes with $x_{-\al}$, hence the commuting of $L$ with $x_{\pm 2\al}$ and $h$.
Recall $U(\Fosp(1|2)) = U(\Fosp(1|2))_{\overline{0}} \oplus U(\Fosp(1|2))_{\overline{1}}$ is a superalgebra, so the statements above imply $L$ belongs to the center of the subalgebra $U(\Fosp(1|2))_{\overline{0}}$ and anti-commutes with elements of $U(\Fosp(1|2))_{\overline{1}}$. Thus $L$ is an anti-central element of $U(\Fosp(1|2))$.

Consider now
\begin{equation}
\CL=L\otimes L \in U.
\end{equation}
Then $\CL$ anti-commutes with $X_{\al}$, hence commutes with $X_{2\al}$, hence $\CL$ belongs to the normalizer $N$ of $I=U\Fg_+$ in $U$.
Thus 
\begin{equation}
\mathsf{P}_M(\CL+\Romanbar{II})=\CL+I,
\end{equation}
or equivalently,
\begin{equation}
P_M(\CL+I)=\CL+I.
\end{equation}

We will now simplify $\bar\CL=\CL+\Romanbar{II}$ and write it in the PBW basis for $U/\Romanbar{II}$.
Below let $\equiv$ stand for congruence modulo $\Romanbar{II}$.
\begin{align*}
\CL &\equiv (L\otimes 1)(1\otimes L)\\
&\equiv (x_{-\al}x_\al -\frac{1}{2}h+\frac{1}{4})\otimes 1 \cdot 1\otimes (x_{-\al}x_\al-\frac{1}{2}h+\frac{1}{4}) \\
&\equiv \big(\frac{1}{4}(X_{-\al}+\tilde x_{-\al})(X_\al+\tilde x_\al) -\frac{1}{4}(H+\tilde h)+\frac{1}{4}\big)
\big( \frac{1}{4}(X_{-\al}-\tilde x_{-\al})(X_\al-\tilde x_\al)-\frac{1}{4}(H-\tilde h)+\frac{1}{4}\big)\\
&\equiv
 \big(\frac{1}{4}\tilde x_{-\al}(X_\al+\tilde x_\al) -\frac{1}{4}(H+\tilde h)+\frac{1}{4}\big)
\big( \frac{1}{4}(-X_{-\al}+\tilde x_{-\al})\tilde x_\al-\frac{1}{4}(H-\tilde h)+\frac{1}{4}\big).
\end{align*}
Thus
\begin{align*}
16\CL &\equiv \tilde x_{-\al}(X_\al+\tilde x_\al)(-X_{-\al}+\tilde x_{-\al})\tilde x_\al \\
&\quad + \tilde x_{-\al}(X_\al+\tilde x_\al) (1-H+\tilde h)\\
&\quad + (1-H-\tilde h)(-X_{-\al}+\tilde x_{-\al})\tilde x_\al\\
&\quad + (1-H-\tilde h)(1-H+\tilde h)\\
&\equiv
\tilde x_{-\al}(X_\al+\tilde x_\al)(-X_{-\al}+\tilde x_{-\al})\tilde x_\al \\
&\quad + \tilde x_{-\al}\tilde x_\al (1-H+\tilde h)+\tilde x_{-\al} [X_\al,\tilde h]\\
&\quad + (1-H-\tilde h)\tilde x_{-\al}\tilde x_\al+[\tilde h,X_{-\al}]\tilde x_\al \\
&\quad + (1-H-\tilde h)(1-H+\tilde h)\\
&\equiv
\tilde x_{-\al}(X_\al+\tilde x_\al)(-X_{-\al}+\tilde x_{-\al})\tilde x_\al \\
&\quad + \tilde x_{-\al}\tilde x_\al (-H+\tilde h)+4\tilde x_{-\al} \tilde x_\al\\
&\quad + (-H-\tilde h)\tilde x_{-\al}\tilde x_\al\\
&\quad + (1-H-\tilde h)(1-H+\tilde h).
\end{align*}
Using that
$X_\al+\tilde x_\al=2 x_\al\otimes 1$ and $X_{-\al}-\tilde x_{-\al} = 2\cdot 1\otimes x_{-\al}$ anti-commute, we get
\begin{align*}
16\CL &\equiv
\tilde x_{-\al}(-\tilde x_{-\al}X_\al+X_{-\al}X_\al-\tilde x_{-\al}\tilde x_\al + X_{-\al}\tilde x_\al)\tilde x_\al \\
&\quad + [\tilde x_{-\al}\tilde x_\al,\tilde h] -2(H-2)\tilde x_{-\al}\tilde x_\al \\
&\quad -\tilde h\tilde h+(H-1)^2.
\end{align*}
Since $\tilde x_{-\al}\tilde x_{-\al}=\frac{1}{2}[\tilde x_{-\al},\tilde x_{-\al}]=X_{-2\al}\in \Fg_-$ and similarly $\tilde x_\al \tilde x_\al \in \Fg_+$, we get
\begin{align*}
16\CL &\equiv
\tilde x_{-\al}X_{-\al}X_\al\tilde x_\al \\
&\quad + [\tilde x_{-\al},\tilde h]\tilde x_\al+\tilde x_{-\al}[\tilde x_\al,\tilde h] -2(H-2)\tilde x_{-\al}\tilde x_\al  -\tilde h\tilde h+(H-1)^2\\
&\equiv (-X_{-\al}\tilde x_{-\al} + 2\tilde x_{-2\al})(-\tilde x_\al X_\al -2\tilde x_{2\al})-2(H-2)\tilde x_{-\al}\tilde x_\al -\tilde h\tilde h +(H-1)^2\\
&\equiv -4\tilde x_{-2\al}\tilde x_{2\al}-2(H-2)\tilde x_{-\al}\tilde x_\al -\tilde h\tilde h +(H-1)^2.
\end{align*}
%
%Using Lemma \ref{lem:tildetodiam}, we convert $\bar\CL$ to an element of $(U/\Romanbar{II}, \bDiamond)$. 
The following theorem gives the explicit expression of $\bar\CL$ in the PBW basis.
\begin{Theorem}
The unique (up to complex scalar multiple) anti-central element $C^{(2)}$ of the form in Lemma \ref{lem:QuadAnsatz} is:
\begin{equation}\label{eq:16L}
    16\bar\CL = 4\frac{H-2}{H-1} \bar x_{-2\al} \bDiamond \bar x_{2\al} - \left(2(H-2) + \frac{1}{H-1}\right) \bar x_{-\al} \bDiamond \bar x_{\al} - \frac{1}{(H-1)^{2}}\hat{h}\bDiamond\hat{h} + (H-1)^{2}.
\end{equation}
\end{Theorem}

\begin{proof}
 As already observed, $\bar\CL$ is an anti-central element. Using Lemma \ref{lem:tildetodiam} one can write $\bar\CL$ in the PBW basis from Proposition \ref{prp:PBW}, obtaining the identity \eqref{eq:16L}.  Therefore, by Lemma \ref{lem:QuadAnsatz}, the coefficients of $\bar\CL$ in the right hand side of \eqref{eq:16L} satisfy equations \eqref{eq:f1}-\eqref{eq:g}.
  %It can be verified that the coefficients of $\bar\CL$ satisfy equations \eqref{eq:f1}-\eqref{eq:g}; equivalently, apply Lemma \ref{lem:QuadAnsatz} to the anti-central element $\bar\CL$. 
  The uniqueness follows from the fact that, given initial values for $f_i(\frac{1}{2})$, $i=0,1,2$, and $g(\frac{1}{2})$, the system \eqref{eq:f1}-\eqref{eq:g} uniquely determines $f_i(a)$ and $g(a)$ for every half-integer $a\in\frac{1}{2}+\Z$. Indeed, the $2\times2$ coefficient matrix has determinant $\frac{-(H-3)}{\;H-2}$. The rational functions $f_i$ and $g$ are then uniquely determined by those values $f_i(a)$ and $g(a)$. 
\end{proof}

\begin{Example}
Continuing Example \ref{ex:rep}, one can check that $\rho(\bar\CL)=\begin{bmatrix}0&0&0\\0&0&0\\0&0&0\end{bmatrix}$: 
\begin{align*}
    % first term
    \rho(16\bar\CL) &= \rho\left(4\frac{H-2}{H-1} \bar x_{-2\al} \bDiamond \bar x_{2\al} - \left(2(H-2) + \frac{1}{H-1}\right) \bar x_{-\al} \bDiamond \bar x_{\al} - \frac{1}{(H-1)^{2}}\hat{h}\bDiamond\hat{h} + (H-1)^{2}\right)\\
    & = \begin{bmatrix}12&0&\phantom{-}0\\0&\frac{20}{3}&\phantom{-}0\\0&0&-4\end{bmatrix} 
    \begin{bmatrix}0&0&0\\ 0&0&0\\ 0&2&0\end{bmatrix} \begin{bmatrix}0&0&0\\ 0&0&2\\ 0&0&0\end{bmatrix} 
    % middle term
    -\begin{bmatrix}-5&\phantom{-}0&0\\\phantom{-}0&-\frac{17}{3}&0\\\phantom{-}0&\phantom{-}0&1\end{bmatrix} 
    \begin{bmatrix}0&2&0\\ 0&0&0\\ 6&0&0\end{bmatrix} \begin{bmatrix}0&0&-6\\ 2&0&\phantom{-}0\\ 0&0&\phantom{-}0\end{bmatrix}\\
    & \quad -  \begin{bmatrix}4&0&0\\0&\frac{4}{9}&0\\0&0&4\end{bmatrix} 
    \begin{bmatrix}-\frac{9}{4} & \phantom{-}0 & \phantom{-}0\\\phantom{-}0 &-\frac{9}{4} & \phantom{-}0\\ \phantom{-}0 & \phantom{-}0 & -\frac{9}{4}\end{bmatrix} 
    \begin{bmatrix}-\frac{9}{4} & \phantom{-}0 & \phantom{-}0\\\phantom{-}0 &-\frac{9}{4} & \phantom{-}0\\ \phantom{-}0 & \phantom{-}0 & -\frac{9}{4}\end{bmatrix} + \begin{bmatrix}\frac{1}{4}&0&0\\0&\frac{9}{4}&0\\0&0&\frac{1}{4}\end{bmatrix}\\
    & = \begin{bmatrix}0&0&\phantom{-}0\\0&0&\phantom{-}0\\0&0&-16\end{bmatrix}  - \begin{bmatrix}-20&0&0\\\phantom{-}0&0&0\\\phantom{-}0&0&-36\end{bmatrix} - \begin{bmatrix}\frac{81}{4}&0&0\\0&\frac{9}{4}&0\\0&0&\frac{81}{4}\end{bmatrix} + \begin{bmatrix}\frac{1}{4}&0&0\\0&\frac{9}{4}&0\\0&0&\frac{1}{4}\end{bmatrix}\\
    & = \begin{bmatrix}0&0&0\\0&0&0\\0&0&0\end{bmatrix}.
\end{align*}

%Truly, both authors wrote down once (or twice) the associated arithmetic of $3\times3$ matrices. Alas, we have tired. %Please forgive us for inviting the reader to partake independently of our.
\end{Example}

\section*{Appendix A}

%%%%
\begin{proof}[Details for the proof of Lemma \ref{lem:DiamondToU}]
	For \eqref{eq:hDalpha} we have
	\begin{align*}
		\bar h \bDiamond \bar x_\al &=
		\tilde h\tilde x_\al + [\tilde h,X_{-\al}]\varphi_1(H+1)[X_\al,\tilde x_\al]+\Romanbar{II} \\
		&=\tilde h\tilde x_\al-2\tilde x_{-\al}\varphi_1(H+1)\tilde x_{2\al}+\Romanbar{II} \\
		&=\tilde h\tilde x_\al-2\varphi_1(H)\tilde x_{-\al}\tilde x_{2\al}+\Romanbar{II}.
	\end{align*}
	For \eqref{eq:-alphaDalpha} we have
	\begin{align*}
		\bar x_{-\al} \bDiamond \bar x_\al &=
		\tilde x_{-\al}\tilde x_\al + [\tilde x_{-\al},X_{-\al}]\varphi_1(H+1)[X_\al,\tilde x_\al] +\Romanbar{II} \\
		&=\tilde x_{-\al}\tilde x_\al - 4\tilde x_{-2\al}\varphi_1(H+1)\tilde x_{2\al}+\Romanbar{II} \\
		&=\tilde x_{-\al}\tilde x_\al - 4\varphi_1(H-1)\tilde x_{-2\al}\tilde x_{2\al}+\Romanbar{II}.
		%\label{eq:-alphaDalpha}
	\end{align*}
	To prove \eqref{eq:hDh} we use three terms from the expansion \eqref{eq:diamond-compute}:
	\begin{align*}
		\bar h\bDiamond \bar h &= (\tilde h)^2 + [\tilde h,X_{-\al}]\varphi_1(H+1)[X_\al,\tilde h]+\\
		&\qquad\qquad +[[\tilde h,X_{-\al}],X_{-\al}]\varphi_2(H+2)[X_\al,[X_\al,\tilde h]] +\Romanbar{II}\\
		&=(\tilde h)^2 + \tilde x_{-\al}\varphi_1(H+1)\tilde x_\al-4\tilde x_{-2\al}\varphi_2(H+2)\tilde x_{2\al}+\Romanbar{II}\\
		&=(\tilde h)^2 + \varphi_1(H)\tilde x_{-\al}\tilde x_\al-4\varphi_2(H)\tilde x_{-2\al}\tilde x_{2\al}+\Romanbar{II}.
		%\label{eq:hDh}
	\end{align*}
	Using \eqref{eq:hDalpha} we have, recalling that $\theta(x_{2\al})=-x_{-2\al}$ and $\theta(x_\al)=\sqrt{-1}x_{-\al}$,
	\begin{align*}
		\bar x_{-\al} \bDiamond \bar h &= -\sqrt{-1}\Theta(\bar h\bDiamond \bar x_\al) \\
		&=\tilde x_{-\al}\tilde h + 2\tilde x_{-2\al}\varphi_1(H+1)\tilde x_\al+\Romanbar{II} \\
		&=\tilde x_{-\al}\tilde h + 2\varphi_1(H-1)\tilde x_{-2\al}\tilde x_\al+\Romanbar{II}.
	\end{align*}
	Similarly, using \eqref{eq:alphaDalpha} we have
	\begin{align*}
		\bar x_{-\al}\bDiamond\bar x_{-\al} &= \Theta(\bar x_\al \bDiamond \bar x_\al) \\
		&=(\tilde x_{-\al})^2 +2 \tilde x_{-2\al}\varphi_1(H+1)\tilde h+\Romanbar{II} \\
		&=(\tilde x_{-\al})^2 +2 \varphi_1(H-1)\tilde x_{-2\al}\tilde h+\Romanbar{II}.
	\end{align*}
\end{proof}

%%%%%%%
\begin{proof}[Details of the proof for Lemma \eqref{lem:tildetodiam}]
	By \eqref{eq:hDalpha},
	\begin{align*}
		\tilde h\tilde x_\al+\Romanbar{II}&=\bar h\bDiamond\bar x_\al+(2\varphi_1(H)\tilde x_{-\al}\tilde x_{2\al}+\Romanbar{II})\\
		&=\bar h\bDiamond\bar x_\al+2\varphi(H)\bar x_{-\al}\bDiamond\bar x_{2\al}.
	\end{align*}
	By \eqref{eq:-alphaDalpha},
	\begin{align*}
		\tilde x_{-\al}\tilde x_\al+\Romanbar{II}&=\bar x_{-\al}\bDiamond \bar x_\al +\big(4\varphi_1(H-1)\tilde x_{-2\al}\tilde x_{2\al}+\Romanbar{II}\big)\\
		&=\bar x_{-\al}\bDiamond\bar x_{\al}+4\varphi(H-1)\bar x_{-2\al}\bDiamond x_{2\al}.
	\end{align*}
	By \eqref{eq:hDh} and \eqref{eq:-alphaDalpha},
	\begin{align*}
		\tilde h\tilde h+\Romanbar{II}&=\bar h\bDiamond\bar h-\big(\varphi_1(H)\tilde x_{-\al}\tilde x_\al-4\varphi_2(H)\tilde x_{-2\al}\tilde x_{2\al}+\Romanbar{II}\big) \\
		&=\bar h\bDiamond\bar h -\varphi_1(H)\big(\bar x_{-\al}\bDiamond \bar x_\al + 4\varphi_1(H-1)\bar x_{-2\al}\bDiamond\bar x_{2\al}\big) + 4\varphi_2(H)\bar x_{-2\al}\bDiamond\bar x_{2\al}\\
		&=\bar h\bDiamond\bar h-\varphi_1(H)\bar x_{-\al}\bDiamond\bar x_\al +4\big(\varphi_2(H)-\varphi_1(H)\varphi_1(H-1)\big)\bar x_{-2\al}\bDiamond\bar x_{2\al}.
	\end{align*}
	By \eqref{eq:RhDalpha},
	\begin{align*}
		\tilde x_{-\al}\tilde h+\Romanbar{II}&=\Theta(\tilde h\tilde x_\al+\Romanbar{II})\\
		&=\Theta\big(\bar h\bDiamond\bar x_\al+2\varphi_1(H)\bar x_{-\al}\bDiamond\bar x_{2\al}\big)\\
		&=\bar x_{-\al}\bDiamond\bar h-\bar x_{-2\al}\bDiamond\bar x_\al\cdot 2\varphi_1(H)\\
		&=\bar x_{-\al}\bDiamond\bar h-2\varphi_1(H-1)\bar x_{-2\al}\bDiamond\bar x_\al.
	\end{align*}
	By \eqref{eq:RalphaDalpha},
	\begin{align*}
		\tilde x_{-\al}\tilde x_{-\al}+\Romanbar{II}&=\Theta(\tilde x_\al\tilde x_\al+\Romanbar{II})\\
		&=\Theta\big(\bar x_\al\bDiamond\bar x_\al+2\varphi_1(H+1)\bar h\bDiamond\bar x_{2\al}\big)\\
		&=\bar x_{-\al}\bDiamond\bar x_{-\al}-\bar x_{-2\al}\bDiamond\bar h\cdot 2\varphi_1(H+1)\\
		&=\bar x_{-\al}\bDiamond\bar x_{-\al}-2\varphi_1(H-1)\bar x_{-2\al}\bDiamond\bar h.
	\end{align*}
\end{proof}

\begin{proof}[Details of the proof of Theorem \ref{thm:relations}]

	We have
	\begin{align*}
		\bar x_\al\bDiamond\bar x_\al &= 
		\tilde x_\al P\tilde x_\al +\Romanbar{II}\\
		&=\tilde x_\al^2+[\tilde x_\al,X_{-\al}]\varphi_1(H+1)[X_\al,\tilde x_\al]+\Romanbar{II}\\
		&=\tilde x_\al^2-2\tilde h\varphi_1(H+1)\tilde x_{2\al}+\Romanbar{II}\\
		&=-2\varphi_1(H+1)\bar h\bDiamond \bar x_{2\al}
	\end{align*}
	where we used that $\tilde x_\al^2=\frac{1}{2}[\tilde x_\al,\tilde x_\al]=-X_{2\al}\in\Fg_+\subset\Romanbar{II}$, and \eqref{eq:anyD2alpha}. Applying $\Theta$ to \eqref{eq:alphaDalpha-rel} gives \eqref{eq:-alphaD-alpha-rel}.
	
	Next,
	\begin{align*}
		\bar x_{2\al}\bDiamond \bar h &=
		\tilde x_{2\al}\tilde h +
		[\tilde x_{2\al},X_{-\al}]\varphi_1(H+1)[X_\al,\tilde h]+
		[[\tilde x_{2\al},X_{-\al}],X_{-\al}]\varphi_2(H+2)[X_\al,[X_\al,\tilde h]] +\Romanbar{II}\nonumber\\
		&=\tilde x_{2\al}\tilde h +(-\tilde x_\al) \varphi_1(H+1) \tilde x_\al +(-\tilde h) \varphi_2(H+2) (-2\tilde x_{2\al}) +\Romanbar{II}\nonumber\\
		&=\tilde h\tilde x_{2\al} -\varphi_1(H+2)\tilde x_\al \tilde x_\al +2\varphi_2(H+2)\tilde h\tilde x_{2\al} +\Romanbar{II}\nonumber\\
		&=	\Big(1+2\varphi_2(H+2)\Big)\bar h\bDiamond\bar x_{2\al}
		%&=\bar h\bDiamond\bar x_{2\al} \\
		%		&\quad-\varphi_1(H+2)\Big(\bar x_\al\bDiamond\bar x_\al+2\varphi_1(H+1)\bar h\bDiamond\bar x_{2\al}\Big) \\
		%		&\quad+ 2\varphi_2(H+2)\bar h\bDiamond\bar x_{2\al} \\
		%		&=\Big(1+2\varphi_2(H+2)-2\varphi_1(H+2)\varphi_1(H+1)\Big)\bar h\bDiamond\bar x_{2\al} -\varphi_1(H+2)\bar x_\al\bDiamond\bar x_\al.
	\end{align*}
	where we used \eqref{eq:anyD2alpha} and that $\tilde x_\al^2\in\Romanbar{II}$ as before.
	%to rewrite the expression \eqref{eq:relsproof1} as follows:
	%\begin{align}
	%&\big(1+2\varphi_2(H+2)\big)\bar h\bDiamond \bar x_{2\al} + \varphi_1(H) \big(\bar x_\al\bDiamond\bar x_\al + 2\varphi_1(H+1)\bar h\bDiamond \bar x_{2\al}\big) \nonumber\\
	%&=\big(1+2\varphi_2(H+2)+\varphi_1(H)\varphi_1(H+1)\big)\bar h\bDiamond \bar x_{2\al} + \varphi_1(H) \bar x_\al \bDiamond \bar x_\al.
	%\label{eq:relsproof2}
	%\end{align}
	%Combining \eqref{eq:relsproof1} and \eqref{eq:relsproof2} we get \eqref{eq:2alphaDh}.
	
	Next,
	\begin{align*}
		\bar x_{2\al}\bDiamond \bar x_{-\al} &=
		\tilde x_{2\al}\tilde x_{-\al} \\
		&\quad+ 
		[\tilde x_{2\al},X_{-\al}]\varphi_1(H+1)[X_\al,\tilde x_{-\al}]  \\
		&\quad +
		[[\tilde x_{2\al},X_{-\al}],X_{-\al}]\varphi_2(H+2)[X_\al,[X_\al,\tilde x_{-\al}]] \\
		&\quad + 
		[\,\cdot\,, X_{-\al}]^3(\tilde x_{2\al})\cdot \varphi_3(H+3)\cdot [X_\al,\,\cdot\,]^3(\tilde x_{-\al}) +\Romanbar{II}\\
		&=\tilde x_{2\al}\tilde x_{-\al} \\
		&\quad+(-\tilde x_\al)\varphi_1(H+1)\tilde h \\
		&\quad+(-\tilde h)\varphi_2(H+2) \tilde x_\al \\
		&\quad+(-\tilde x_{-\al}) \varphi_3(H+3) (-2\tilde x_{2\al}) +\Romanbar{II}\\
		&= \Big(1+2\varphi_3(H+2)\Big)\tilde x_{-\al}\tilde x_{2\al} +\Big(-\varphi_1(H+2)-\varphi_2(H+2)\Big)\tilde h\tilde x_{\al} +\Romanbar{II} \\
		&=\Big(1+2\varphi_3(H+2)\Big)\bar x_{-\al}\bDiamond\bar x_{2\al}\\
		&\quad
		+\Big(-\varphi_1(H+2)-\varphi_2(H+2)\Big)
		\Big(\bar h\bDiamond\bar x_\al + 2\varphi_1(H)\bar x_{-\al}\bDiamond \bar x_{2\al}\Big)\\
		&=\Big(1+2\varphi_3(H+2)-2\varphi_1(H+2)\varphi_1(H)-2\varphi_2(H+2)\varphi_1(H)\Big)\bar x_{-\al}\bDiamond\bar x_{2\al} \\
		&\quad+\Big(-\varphi_1(H+2)-\varphi_2(H+2)\Big)\bar h\bDiamond \bar x_{2\al}
	\end{align*}
	using \eqref{eq:anyD2alpha} and \eqref{eq:hDalpha}.
	
	%\begin{align*}
	%&\big(1+2\varphi_3(H+2)\big)\bar x_{-\al}\bDiamond \bar x_{2\al} -
	%\big(\varphi_1(H+2)+\varphi_2(H+2)\big)
	%\big(\bar h\bDiamond \bar x_\al + 2\varphi_1(H)\bar x_{-\al}\bDiamond \bar x_{2\al}\big)\\
	%&=\big(1+2\varphi_3(H+2)
	%-2\varphi_1(H+2)\varphi_1(H)
	%-2\varphi_2(H+2)\varphi_1(H)\big)\bar x_{-\al}\bDiamond \bar x_{2\al} \\
	%&\qquad -\big(\varphi_1(H+2)+\varphi_2(H+2)\big)
	%\bar h\bDiamond \bar x_\al,
	%\end{align*}
	%proving \eqref{eq:2alphaD-alpha}.
	
	Next,
	\begin{align*}
		\bar x_{2\al} \bDiamond \bar x_{-2\al} &=
		\sum_{n=0}^4
		[\,\cdot\,,X_{-\al}]^n(\tilde x_{2\al})\cdot \varphi_n(H+n) \cdot [X_\al,\,\cdot\,]^n(x_{-2\al})+\Romanbar{II}\\
		&=\tilde x_{2\al}\tilde x_{-2\al}
		-\tilde x_\al\varphi_1(H+1)\tilde x_{-\al}
		-\tilde h\varphi_2(H+2)\tilde h
		-\tilde x_{-\al}\varphi_3(H+3)\tilde x_\al\\
		&\quad+4\tilde x_{-2\al}\varphi_4(H+4)\tilde x_{2\al}+\Romanbar{II}
		\\
		&=\tilde x_{-2\al}\tilde x_{2\al} - H - \varphi_1(H+2) \big(-\tilde x_{-\al}\tilde x_{\al} +H\big) - \varphi_2(H+2)\tilde h^2 -\varphi_3(H+2) \tilde x_{-\al}\tilde x_\al \\ 
		&\quad+ 4\varphi_4(H+2) \tilde x_{-2\al}\tilde x_{2\al}+\Romanbar{II}\\
		&=-H\big(1+\varphi_1(H+2)\big) +\big(1+4\varphi_4(H+2)\big)\tilde x_{-2\al}\tilde x_{2\al} \\
		&\quad + \big(\varphi_1(H+2)-\varphi_3(H+2)\big)\tilde x_{-\al}\tilde x_\al
		-\varphi_2(H+2)\tilde h^2 + \Romanbar{II}\\
		&=-H\big(1+\varphi_1(H+2)\big) 
		+\big(1+4\varphi_4(H+2)\big)\bar x_{-2\al}\bDiamond \bar x_{2\al} \\
		&\quad+ \big(\varphi_1(H+2)-\varphi_3(H+2)\big)\big(\bar x_{-\al}\bDiamond \bar x_\al+4\varphi_1(H-1)\bar x_{2\al}\bDiamond\bar x_{2\al}\big) \\
		&\quad -\varphi_2(H+2)\Big(\bar h\bDiamond\bar h-\varphi_1(H)\bar x_{-\al}\bDiamond \bar x_\al+\big(4\varphi_2(H)-4\varphi_1(H)\varphi_1(H-1)\big)\bar x_{-2\al}\bDiamond\bar x_{2\al}\Big)\\
		&=-H\big(1+\varphi_1(H+2)\big) +
		\Big(1+4\varphi_4(H+2)+4\varphi_1(H+2)\varphi_1(H-1) \\
		&\quad-4\varphi_3(H+3)\varphi_1(H-1)-4\varphi_2(H+2)\varphi_2(H)+4\varphi_2(H+2)\varphi_1(H)\varphi_1(H-1)
		\Big)\bar x_{-2\al}\bDiamond\bar x_{2\al} \\
		&\quad+\Big(\varphi_1(H+2)-\varphi_3(H+2)+\varphi_2(H+2)\varphi_1(H)\Big)\bar x_{-\al}\bDiamond\bar x_\al -\varphi_2(H+2)\bar h\bDiamond\bar h
	\end{align*} 
	where we used \eqref{eq:anyD2alpha}, \eqref{eq:hDh}, and \eqref{eq:-alphaDalpha} in the penultimate equality to express products in the tilde variables in terms of the diamond products.
	% to obtain
	%\begin{align*}
	%&\big(1-\varphi_1(H)\big)H +\big(1+4\varphi_4(H+2)\big)
	%\bar x_{-2\al}\bDiamond\bar x_{2\al} \\
	%&-\varphi_2(H+2)
	%\bigg(\bar h\bDiamond \bar h -\varphi_1(H)\big(\bar x_{-\al}\bDiamond \bar x_\al + 4\varphi_1(H-1)\bar x_{-2\al}\bDiamond \bar x_{2\al}\big)+4\varphi_2(H)\bar x_{-2\al}\bDiamond \bar x_{2\al}\bigg) \\
	%&\quad - \big(\varphi_3(H+2)+\varphi_1(H)\big)
	%\big(
	%\bar x_{-\al}\bDiamond \bar x_\al + 
	%4\varphi_1(H-1)\bar x_{-2\al}\bDiamond \bar x_{2\al}\big).
	%\end{align*}
	%Upon collecting the coefficients in front of $\bar x_{-2\al}\bDiamond \bar x_{2\al}$ and $\bar x_{-\al}\bDiamond \bar x_{\al}$ we obtain \eqref{eq:2alphaD-2alpha}.
	
	Next,
	\begin{align*}
		\bar x_\alpha \bDiamond \bar h &= 
		\tilde x_\al\tilde h+[\tilde x_\al,X_{-\al}]\varphi_1(H+1)[X_\al,\tilde h]+[[\tilde x_{\al},X_{-\al}],X_{-\al}]\varphi_2(H+2)[X_\al,[X_\al,\tilde h]]+\Romanbar{II} \\
		&=\big(1+ \varphi_1(H+1)\big)\tilde h \tilde x_\al -2 \varphi_2(H+1)\tilde x_{-\al} \tilde x_{2\al}\\
		&=\big(1+\varphi_1(H+1)\big)\big(\bar h\bDiamond \bar x_{\al}+2\varphi_1(H)\bar x_{-\al}\bDiamond \bar x_{2\al}\big) 
		-2\varphi_2(H+1) \bar x_{-\al}\bDiamond \bar x_{2\al} \\
		&=\big(1+\varphi_1(H+1)\big)\bar h\bDiamond\bar x_\al +\big(2\varphi_1(H)+2\varphi_1(H+1)\varphi_1(H)-2\varphi_1(H+1)\big)\bar x_{-\al}\bDiamond\bar x_{2\al}
	\end{align*}
	where we used \eqref{eq:hDalpha} and \eqref{eq:anyD2alpha}.
	Substituting $\varphi_1(H)=\varphi_2(H)=\frac{-1}{H-1}$, one checks that $2\varphi_1(H)+2\varphi_1(H+1)\varphi_1(H)-2\varphi_2(H+1)=0$, proving \eqref{eq:alphaDh}.
	
	Next,
	\begin{align*}
		\bar x_\al \bDiamond \bar x_{-\al} &=
		\sum_{n=0}^3 [\cdot, X_{-\al}]^n(\tilde x_\al)\cdot \varphi_n(H+n)\cdot [X_\al,\cdot]^n(\tilde x_{-\al})+\Romanbar{II}\\
		&= -\tilde x_{-\al} \tilde x_\al + H + \varphi_1(H+1) \tilde h^2+\varphi_2(H+1)\tilde x_{-\al}\tilde x_\al-4\varphi_3(H+1)\tilde x_{-2\al}\tilde x_{2\al}+\Romanbar{II}\\
		&=H-4\varphi_3(H+1)\bar x_{-2\al}\bDiamond\bar x_{2\al}+\big(-1+\varphi_2(H+1)\big)\big(\bar x_{-\al}\bDiamond\bar x_\al+4\varphi_1(H-1)\bar x_{-2\al}\bDiamond \bar x_{2\al}\big)\\
		&\quad+\varphi_1(H+1)\Big(\bar h\bDiamond\bar h-\varphi_1(H)\bar x_{-\al}\bDiamond\bar x_\al+\big(4\varphi_2(H)-4\varphi_1(H)\varphi_1(H-1)\big)\bar x_{-2\al}\bDiamond \bar x_{2\al}\Big)\\
		&=H+\Big(-1+\varphi_2(H+1)-\varphi_1(H+1)\varphi_1(H)\Big)\bar x_{-\al}\bDiamond\bar x_\al \\
		&\quad+\Big(-4\varphi_3(H+1)-4\varphi_1(H-1)+4\varphi_2(H+1)\varphi_1(H-1)+4\varphi_1(H+1)\varphi_2(H)\\
		&\quad-4\varphi_1(H+1)\varphi_1(H)\varphi_1(H-1)\Big)\bar x_{-2\al}\bDiamond\bar x_{2\al} + \varphi_1(H+1)\bar h\bDiamond\bar h
	\end{align*}
	using \eqref{eq:anyD2alpha},\eqref{eq:-alphaDalpha}, and \eqref{eq:hDh}. Substituting expressions for $\varphi_i(H)$, and simplifying, gives \eqref{eq:alphaD-alpha}.
	
	Relation \eqref{eq:alphaD-2alpha} follows directly from \eqref{eq:2alphaD-alpha} by applying the anti-automorphism $\Theta$ to both sides. Similarly \eqref{eq:hD-alpha} follows from \eqref{eq:alphaDh}, and \eqref{eq:hD-2alpha} follows from \eqref{eq:2alphaDh}.
\end{proof}

\section*{Appendix B}
\begin{proof}[Details of the proof for Lemma \eqref{lem:QuadAnsatz}]
%Here we derive the conditions of uniqueness for the element $C^{(2)}$ of Lemma \ref{lem:QuadAnsatz}.
%
The following expressions hold in the double coset algebra ($U/\Romanbar{II}, \bDiamond)$, where we suppress any adorning bars of basis elements and $\bDiamond$ in products. We also write $\hat h=(H-1)h$ for simplicity.
Thus, we write $C^{(2)}$ as
\begin{equation}
C^{(2)} = f_2(H) x_{-2\al} x_{2\al} + f_1(H)x_{-\al}x_{\al} + f_0(H)\hat h\hat h+g(H).
\end{equation}

We have
\begin{align*}
C^{(2)}x_{-\al} &=f_2(H)x_{-2\al}(x_{2\al}x_{-\al})+f_1(H)x_{-\al}(x_\al x_{-\al}) + f_0(H)x_{-\al}\hat h\hat h + g(H)x_{-\al}\\
&\overset{\eqref{eq:2alphaD-alpha},\eqref{eq:alphaD-alpha}}{=}f_2(H)x_{-2\al}\Big(\big(1-\frac{2}{H(H-1)}\big)x_{-\al}x_{2\al}+\frac{2}{H+1}hx_\al\Big)\\
&\quad+ f_1(H)x_{-\al}\Big( -\big(1+\frac{1}{H-1}\big)x_{-\al}x_\al+\frac{4H}{(H-1)(H-2)}x_{-2\al}x_{2\al}-\frac{1}{H(H-1)^2}\hat h\hat h+H\Big)\\
&\quad+ f_0(H)x_{-\al}\hat h\hat h+g(H)x_{-\al}\\
&=f_2(H)\big(1-\frac{2}{(H-2)(H-3)}\big)x_{-2\al}x_{-\al}x_{2\al}\\
&\quad+ f_2(H)\frac{2}{H-1}x_{-2\al}hx_\al\\
&\quad- f_1(H)\big(1+\frac{1}{H-2}\big)x_{-\al}x_{-\al}x_\al\\
&\quad+ f_1(H)\frac{4(H-1)}{(H-2)(H-3)}\big(1-\frac{2}{H-2}\big)x_{-2\al}x_{-\al}x_{2\al}\\
&\quad+f_0(H)x_{-\al}\hat h\hat h-f_1(H)\frac{1}{(H-1)(H-2)^2}x_{-\al}\hat h\hat h\\
&\quad+g(H)x_{-\al}+f_1(H)(H-1)x_{-\al}\\
&\overset{\eqref{eq:-alphaD-alpha-rel},\eqref{eq:-alphaD-2alpha}}{=} \Big(f_2(H)\big(1-\frac{2}{(H-2)(H-3)}\big)+f_1(H)\frac{4(H-1)}{(H-2)(H-3)}\big(1-\frac{2}{H-2}\big)\Big)x_{-2\al}x_{-\al}x_{2\al}\\
&\quad+\Big(f_2(H)\frac{2}{H-1}+f_1(H)\big(1+\frac{1}{H-2}\big)\frac{2}{H-2}\Big)x_{-2\al}hx_\al\\
&\quad+\Big(f_0(H)-f_1(H)\frac{1}{(H-1)(H-2)^2}\Big)x_{-\al}\hat h\hat h\\
&\quad+\Big(g(H)+f_1(H)(H-1)\Big)x_{-\al}.
\end{align*}

On the other hand,
\begin{align*}
x_{-\al}C^{(2)} &= f_2(H-1)(x_{-\al}x_{-2\al})x_{2\al}+f_1(H-1)(x_{-\al}x_{-\al})x_\al+f_0(H-1)x_{-\al}\hat h\hat h+g_0(H-1)x_{-\al}\\
&\overset{\eqref{eq:-alphaD-2alpha},\eqref{eq:-alphaD-alpha-rel}}{=} f_2(H-1)\Big(1-\frac{2}{H-2}\Big)x_{-2\al}x_{-\al}x_{2\al}\\
&\quad-f_1(H-1)\frac{2}{H-2}x_{-2\al}hx_\al\\
&\quad+f_0(H-1)x_{-\al}\hat h\hat h\\
&\quad+g(H-1)x_{-\al}.
\end{align*}
By the PBW theorem, Proposition \ref{prp:PBW}, $C^{(2)}$ anti-commutes with $x_{-\al}$ if and only if
\begin{align*}
f_1(H-1)\frac{2}{H-2}&=f_2(H)\frac{2}{H-1}+f_1(H)\big(1+\frac{1}{H-2}\big)\frac{2}{H-2}\\
-f_2(H-1)\big(1-\frac{2}{H-2}\big)&=f_2(H)\big(1-\frac{2}{(H-2)(H-3)}\big)+f_1(H)\frac{4(H-1)}{(H-2)(H-3)}\big(1-\frac{2}{H-2}\big) \\
-f_0(H-1)&=f_0(H)-f_1(H)\frac{1}{(H-1)(H-2)^2}\\
-g(H-1)&=g(H)+f_1(H)(H-1)
\end{align*}
Simplifying these equations gives \eqref{eq:f1},\eqref{eq:f2},\eqref{eq:f0},\eqref{eq:g}.

%\begin{align}
%f_1(H-1)&=-\frac{H-2}{H-1}f_1(H)-\frac{H-2}{H-1}f_2(H)\\
% f_2(H-1)&=\frac{4(H-1)}{(H-2)(H-3)}f_1(H)+\frac{(H-1)(H-4)^2}{(H-2)(H-3)(H-6)}f_2(H) \\
%f_0(H-1)&=f_0(H)-f_1(H)\frac{1}{(H-1)(H-2)^2}\\
%g(H-1)&=g(H)+f_1(H)(H-1)
%\end{align}
\end{proof}

\printbibliography

\end{document}